\documentclass[11pt, reqno]{amsart}
\oddsidemargin = 0cm \evensidemargin = 0cm \textwidth = 15cm

\usepackage{amsfonts}
\usepackage{amsmath}
\usepackage{amssymb}
\usepackage{amscd}
\usepackage[mathscr]{eucal}
\usepackage{url}

\allowdisplaybreaks[1]






\newcommand{\aN}[4]{a\left(\begin{smallmatrix} #1 & #2 \\ #3 & #4 \end{smallmatrix}\right)}
\newcommand{\sm}[4]{\left(\begin{smallmatrix} #1 & #2 \\ #3 & #4 \end{smallmatrix}\right)}
\newcommand{\mat}[4]{\begin{pmatrix} #1 & #2 \\ #3 & #4 \end{pmatrix}}

\makeatletter
\def\imod#1{\allowbreak\mkern2mu({\operator@font mod}\,\,#1)}
\makeatother

\makeatletter
\def\jmod#1{\allowbreak\mkern5mu({\operator@font mod}\,\,#1)}
\makeatother


\theoremstyle{plain}

\newtheorem{theorem}{Theorem}[section]
\newtheorem{lemma}[theorem]{Lemma}
\newtheorem{prop}[theorem]{Proposition}
\newtheorem{cor}[theorem]{Corollary}
\theoremstyle{definition}

\numberwithin{equation}{section}

\makeatletter
\def\imod#1{\allowbreak\mkern5mu({\operator@font mod}\,\,#1)}
\makeatother


\begin{document}

\title[Multiplicative Relations for Coefficients of Degree 2 Eigenforms]{Multiplicative Relations for Fourier Coefficients\\ of Degree 2 Siegel Eigenforms}

\author{Dermot M\lowercase{c}Carthy}
\address{Dermot M\lowercase{c}Carthy, Department of Mathematics \& Statistics\\
Texas Tech University\\
Lubbock, TX 79410-1042\\
USA}
\email{dermot.mccarthy@ttu.edu}


\subjclass[2010]{Primary: 11F30, 11F46}


\begin{abstract}
We prove multiplicative relations between certain Fourier coefficients of degree 2 Siegel eigenforms. These relations are analogous to those for elliptic eigenforms. We also provide two sets of formulas for the eigenvalues of degree 2 Siegel eigenforms. The first evaluates the eigenvalues in terms of the form's Fourier coefficients, in the case $a(I) \neq 0$. The second expresses the eigenvalues of index $p$ and $p^2$, for $p$ prime, solely in terms of $p$ and $k$, the weight of the form, in the case $a(0)\neq 0$. From this latter case, we give simple expressions for the eigenvalues associated to degree 2 Siegel Eisenstein series. 
\end{abstract}

\maketitle


\section{Introduction and Statement of Main Results}\label{sec_Intro}
The theory of Hecke operators provides us with many of the fundamental results about the spaces of elliptic modular forms. For example, the space of elliptic modular forms, of a given weight, has a basis of Hecke eigenforms which have multiplicative Fourier coefficients. 


Hecke theory has been extended to Siegel modular forms, with the work of Andrianov at its core (see for example \cite{AN, AN2, AN3}), but in some respects the results aren't as satisfying. While the spaces of Siegel modular forms have a basis of eigenforms, and Andrianov provides us with a comprehensive structure for the relationship between the eigenvalues and Fourier coefficients of degree 2 eigenforms, we do not get simple multiplicative relations between the Fourier coefficients, as exists in the elliptic case. The main purpose of this paper is to prove, using the results of Andrianov, that simple multiplicative relations, which are analogous to the elliptic case, do exist between certain Fourier coefficients of degree 2 Siegel eigenforms.  

Let $f$ be an elliptic eigenform of weight $k\geq1$ on the full modular group, with Fourier expansion $f(z)= \sum_{n \geq 0} a(n) q^n$, with $q:=e^{2 \pi i z}$. Then its Fourier coefficients satisfy the following properties \cite{BGHZ, DS}:
\begin{enumerate}
\item If $(1)=0$, then $a(m)=0$ for all $m \in \mathbb{Z^{+}}$.\\[-9pt]
\item $a(1) \, a(mn) = a(m) \, a(n)$ when $\gcd(m,n)=1$.\\[-9pt]
\item $a(1) \, a(p^{r+1}) = a(p) \, a(p^r) - p^{k-1} \, a(1) \, a(p^{r-1})$, for all $p$ prime and $r \geq 1$.
\end{enumerate} 
If $f$ is non-zero, then property (1) ensures that $a(1)\neq 0$, and we can normalize $f$ by setting $a(1)=1$. In which case, we can drop the $a(1)$ factors in properties (2) and (3). 

The main result of this paper provides analogous properties for degree 2 Siegel eigenforms. Let $M_k^2(\Gamma)$ denote the space of Siegel modular forms of degree 2 and weight $k$ on the full modular group. Let $I$ denote the $2 \times 2$ identity matrix.\\


\begin{theorem}\label{thm_mult}
Let $F(Z) = \sum_{N \geq 0} a(N) \, exp(2 \pi i \, \textup{Tr}(NZ)) \in M_k^2(\Gamma)$ be an eigenform. 
\begin{enumerate}
\item If $a \left(I \right) =0$, then $a \left(mI \right) =0$ for all $m \in \mathbb{Z^{+}}$.\\[-9pt]
\item $a \left(I \right) \, a \left(mnI \right) = a \left(mI \right) \, a \left(nI \right)$ when $\gcd(m,n)=1$.\\[-9pt]
\item $a\left(I \right)  \, a \left(p^{r+1} I \right)  = a \left(pI \right)  \, a \left(p^{r} I \right) - p^{2k-3} \, a \left(I \right)  \, a \left(p^{r-1} I \right) \\
- p^{k-2} \, a \left(I \right)
 \Biggl[ 2 \, \aN{p^{r-1}}{0}{0}{p^{r+1}} + (1+ (-1)^k) \displaystyle \sum^{\frac{p-1}{2}}_{\substack{u = 1 \\ u^2 \not\equiv -1 \imod{p}}} a\left( p^r \left(\begin{smallmatrix} (1+u^2)p^{-1} & u \\ u & p \end{smallmatrix} \right) \right)  \Biggr]$,\\ for all $p$ prime and $r \geq 1$, and where the last sum is vacuous in the case $p=2$.
\end{enumerate} 
\end{theorem}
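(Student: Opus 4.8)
The plan is to derive all three parts from Andrianov's explicit description of how the degree~2 Hecke operator $T(p)$ acts on Fourier coefficients. Decomposing the double coset of $\mathrm{diag}(1,1,p,p)$ into single cosets $\sm{A}{B}{0}{D}$ with ${}^tAD=pI$ yields an identity of the shape
\[
(T(p)F)(N)=a(pN)+p^{k-2}\,S(N)+p^{2k-3}\,a(p^{-1}N),
\]
where $S(N)$ gathers the coefficients $a(M)$ coming from the intermediate representatives (those with $A$ conjugate to $\mathrm{diag}(p,1)$) and $a(p^{-1}N)=0$ unless $p^{-1}N$ is half-integral. Since $F$ is an eigenform, $T(p)F=\lambda(p)F$, so $\lambda(p)\,a(N)=a(pN)+p^{k-2}S(N)+p^{2k-3}a(p^{-1}N)$ for every $N$. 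Computing $S(p^rI)$ produces the two shapes in the theorem: the representatives with $B=0$ send $p^rI$ to $\mathrm{diag}(p^{r+1},p^{r-1})$, which is $GL_2(\mathbb{Z})$-equivalent to $\aN{p^{r-1}}{0}{0}{p^{r+1}}$ and accounts for the factor $2$, while those with $B\neq0$ give the coefficients $a\bigl(p^r\sm{(1+u^2)p^{-1}}{u}{u}{p}\bigr)$. The factor $1+(-1)^k$ records the pairing $u\mapsto-u$ together with the transformation law $a({}^tUNU)=(\det U)^k a(N)$, which contributes the sign $(-1)^k$ and reduces the summation to $1\le u\le\tfrac{p-1}{2}$.

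The decisive observation concerns the residues with $u^2\equiv-1\imod{p}$. Writing $1+u^2=pv$, the matrix $\sm{v}{u}{u}{p}$ has determinant $vp-u^2=1$ and is positive definite, hence is properly equivalent to $I$; therefore $a\bigl(p^r\sm{v}{u}{u}{p}\bigr)=a(p^rI)$. These terms are not genuine off-diagonal corrections but a scalar multiple of $a(p^rI)$, so the first step is to split the $u$-sum in $S(p^rI)$ at the condition $u^2\equiv-1$, move the degenerate part to the left-hand side of the eigenform identity, and absorb it into the eigenvalue. This replaces $\lambda(p)$ by $\tilde\lambda(p):=\lambda(p)-p^{k-2}(1+(-1)^k)\,c_p$, where $c_p$ counts the relevant residues, and leaves on the right exactly the restricted sum over $u^2\not\equiv-1$ appearing in the statement.

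Evaluating the resulting identity at $N=I$ then fixes the normalisation: $p^{-1}I$ and the diagonal image $\mathrm{diag}(p,p^{-1})$ are not integral, and every surviving off-diagonal term has $u^2\not\equiv-1$ and so vanishes, giving $\tilde\lambda(p)\,a(I)=a(pI)$, the exact analogue of $\lambda(p)=a(p)/a(1)$ in the elliptic case. Specialising at $N=p^rI$, multiplying through by $a(I)$, and substituting $\tilde\lambda(p)\,a(I)=a(pI)$ then converts the eigenvalue term into $a(pI)\,a(p^rI)$, turns the top and bottom terms into $a(I)\,a(p^{r+1}I)$ and $p^{2k-3}a(I)\,a(p^{r-1}I)$, and delivers the bracketed correction of part~(3). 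Parts~(1) and~(2) are more direct consequences of the multiplicative structure of the Hecke algebra: $T(mn)=T(m)T(n)$ for $\gcd(m,n)=1$, applied to the eigenform, shows that $m\mapsto a(mI)/a(I)$ is multiplicative, which is~(2); and~(1) is the degenerate case in which the same recursion forces $a(mI)=0$ for all $m$ once $a(I)=0$.

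The step I expect to be the main obstacle is the reduction and counting hidden inside $S(p^rI)$: one must verify that no intermediate coset other than the two listed shapes produces a valid half-integral index, sort the images into $GL_2(\mathbb{Z})$-equivalence classes with the correct multiplicities, and check the determinant-$1$ reduction for the excluded residues uniformly in $r$. The prime $p=2$ requires separate treatment, since the off-diagonal sum is then empty and only the doubled diagonal correction survives.
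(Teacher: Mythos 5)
Your outline of part (3) follows essentially the same route as the paper: Andrianov's coset decomposition for $T(p)$ evaluated at $N=p^rI$, splitting the $u$-sum at $u^2\equiv-1\imod{p}$, reducing the degenerate terms to $a(p^rI)$, establishing $a(pI)=\bigl(\lambda(p)-h_1(p)p^{k-2}\bigr)a(I)$ by evaluating at $N=I$, and pairing $u\leftrightarrow p-u$ to produce the factor $1+(-1)^k$. Your justification of the key reduction --- that $\sm{(1+u^2)/p}{u}{u}{p}$ is an integral positive definite symmetric matrix of determinant $1$ and hence properly equivalent to $I$ --- is actually cleaner than the paper's explicit sum-of-two-squares construction (Proposition \ref{prop_sumofsquares} and Lemma \ref{lem_STS}), and it generalizes verbatim to $\sm{(1+u^2)/p^{\beta}}{u}{u}{p^{\beta}}$ since the determinant is $1$ for every $\beta$. (Minor slip: absorbing the degenerate terms should subtract $h_1(p)p^{k-2}$, not $p^{k-2}(1+(-1)^k)c_p$; at $p=2$ these differ, though you do flag $p=2$ for separate treatment.)

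The genuine gap is in parts (1) and (2). You assert that $T(mn)=T(m)T(n)$ for $\gcd(m,n)=1$ ``shows that $m\mapsto a(mI)/a(I)$ is multiplicative.'' That inference works in the elliptic case only because there $\lambda(m)=a(m)/a(1)$, so multiplicativity of eigenvalues transfers to coefficients. Here that identification fails: already $\lambda(p)=a(pI)/a(I)+h_1(p)p^{k-2}$, and $\lambda(m)\neq a(mI)/a(I)$ in general, so multiplicativity of the $\lambda$'s says nothing directly about the $a(mI)$'s. Nor can you iterate your $\delta=1$ identity, since $\lambda(p)\,a(nI)=a(npI)+h_1(p)p^{k-2}a(nI)$ holds only for $(n,p)=1$; applying $T(p)$ at $N=np^jI$ with $j\geq1$ reintroduces off-diagonal coefficients $a\bigl(np^j\sm{(1+u^2)/p}{u}{u}{p}\bigr)$ with $u^2\not\equiv-1$ that do not reduce to diagonal ones, so the recursion does not close. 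What is actually needed --- and what the paper supplies in Corollary \ref{cor_apmI} --- is the explicit formula for $a(p^{\delta};mI)$ for \emph{all} $\delta\geq1$ with $(m,p)=1$, together with the observation that every correction term there is again a diagonal coefficient $a(mp^{\delta-\beta}I)$ (this is where the class-number-one reduction is needed at every power $p^{\beta}$, not just $\beta=1$). Part (1) then follows by induction on $\delta$ and part (2) by comparing $a(I)$ times the identity for $a(mI)\lambda(p^{\delta})$ with $a(mI)$ times the identity for $a(I)\lambda(p^{\delta})$, again by induction on $\delta$. Without the $\delta\geq2$ coefficient formulas, your argument for (1) and (2) does not go through.
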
 

If $a(I) \neq 0$, we can normalize $F$ by setting $a(I) = 1$. In this case we can remove the $a(I)$ factors from properties (2) and (3) of Theorem \ref{thm_mult}. Unfortunately, property (1) of Theorem \ref{thm_mult} is not sufficient to ensure $a(I) \neq 0$ for non-zero eigenforms, as happens in the case of elliptic eigenforms. In fact, if the weight $k$ is odd then $a(I) = 0$, which we will see in Section \ref{sec_Prelim}. However, the following classes of degree 2 Siegel eigenforms all have $a(I) \neq 0$: Siegel Eisenstein series, $E_k$, which have even weight $k \geq 4$  (see Section 2); the unique cusp eigenforms of weights 10 and 12, often denoted $\chi_{10}$ and $\chi_{12}$, which along with the Eisenstein series $E_4$ and $E_6$ generate the graded ring of even weight degree 2 Siegel modular forms \cite{Ig1}; and Klingen Eisenstein series generated from elliptic eigenforms \cite{Mi}.

In the elliptic case, the eigenvalues of an eigenform, normalized with a(1)=1, are related to the form's Fourier coefficients in the following way:
$$\lambda(m) = a(m) \qquad \textup{for all } m \in \mathbb{Z}^{+},$$
where $\lambda(m)$ is the eigenvalue associated to the Hecke operator of index $m$. We can form a Dirichlet series from the eigenvalues $\lambda(m)$, and it has the following Euler product: 
$$ \sum_{m=1}^{\infty} \frac{\lambda(m)}{m^s} = \prod_p (1- \lambda(p) p^{-s} + p^{k-1-2s})^{-1}.$$
Therefore we can generate all eigenvalues if we know $\lambda(p)=a(p)$ for all primes $p$.
Analogously, in the degree 2 case, the Andrianov $L$-function of an eigenform, from which we can calculate the Dirichlet series formed from its eigenvalues, can be generated by $\lambda(p)$ and $\lambda(p^2)$, its eigenvalues of index $p$ and $p^2$ respectively. We prove the following theorems which describe how these eigenvalues can be calculated from the form's Fourier coefficients.

\begin{theorem}\label{thm_lambdaAI}
Let $F(Z) = \sum_{N \geq0} a(N) \exp \left({2 \pi i} \, \textup{Tr}(NZ) \right) \in M_k^2(\Gamma)$ be an eigenform, normalized with $a(I)=1$. Let
$$h_1(p) =
\begin{cases}
2 & \textup{ if } p \equiv 1 \pmod{4}\\
1  & \textup{ if } p=2\\
0  & \textup{ if } p \equiv 3 \pmod{4},
\end{cases}
\quad \textup{and} \quad
h_2(p) = 
\begin{cases}
2 & \textup{ if } p \equiv 1 \pmod{4}\\
0  & \textup{ if } p=2 \textup{ or }p \equiv 3 \pmod{4}.
\end{cases}$$
Then for any prime p, the eigenvalues of index $p$ and $p^2$ associated to $F$ satisfy
\begin{equation*}
\lambda_F(p) = a(pI) + h_1(p) \,p^{k-2}
\end{equation*}
and
\begin{equation*}
\lambda_F(p^2) = a(p^2I) + h_1(p) \, p^{k-2} \, a(pI) + h_2(p) \, p^{2k-4}.
\end{equation*}
\end{theorem}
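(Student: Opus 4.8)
The plan is to read off both eigenvalue formulas from the explicit description, due to Andrianov and recalled in Section \ref{sec_Prelim}, of how the Hecke operators of index $p$ and $p^2$ act on the Fourier coefficients of a degree 2 Siegel form. Since $F$ is an eigenform normalized with $a(I)=1$, the relation $T(p)F=\lambda_F(p)F$ forces, upon comparing the $N=I$ Fourier coefficients of the two sides,
$$\lambda_F(p) = \lambda_F(p)\,a(I) = \bigl[\text{the $I$-th Fourier coefficient of } T(p)F\bigr],$$
and likewise $\lambda_F(p^2)$ equals the $I$-th Fourier coefficient of $T(p^2)F$. Thus the entire proof reduces to evaluating Andrianov's coefficient formulas at the single index $N=I$, exactly the computation already underlying Theorem \ref{thm_mult}(3).

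First I would expand the $I$-th coefficient of $T(p)F$. It is a finite sum consisting of $a(pI)$, a term $p^{2k-3}\,a(p^{-1}I)$, and a middle block of terms $p^{k-2}\,a(N')$ in which $N'$ runs over the integral positive definite symmetric matrices of the form $\tfrac1p\,{}^tV\,V$ produced by the coset representatives. The term $a(p^{-1}I)$ vanishes, since $p^{-1}I$ is not an admissible (half-)integral index, so only the middle block perturbs $a(pI)$. The decisive point is that each surviving $N'$ has determinant $1$; because there is a single $GL_2(\mathbb{Z})$-class of positive definite forms of determinant $1$ (the discriminant $-4$ class number equals one), every such $N'$ is equivalent to $I$, whence $a(N')=a(I)=1$. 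Hence $\lambda_F(p)=a(pI)+p^{k-2}\cdot\#\{\text{surviving }N'\}$, and this count is precisely the number of $u \imod{p}$ with $u^2\equiv-1\imod{p}$, namely $2$ for $p\equiv1\imod{4}$, $0$ for $p\equiv3\imod{4}$, and $1$ for $p=2$. This is $h_1(p)$.

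For the index $p^2$ operator I would run the same argument with its longer coefficient formula. Evaluating at $N=I$, the non-integral arguments again drop out, and the surviving terms sort into three groups by the $GL_2(\mathbb{Z})$-class of their matrix argument: those equivalent to $p^2I$ give $a(p^2I)$; those of determinant $p^2$ equivalent to $pI$ give $p^{k-2}\,a(pI)$ with multiplicity once more counted by $u^2\equiv-1\imod{p}$, reproducing the cross term $h_1(p)\,p^{k-2}\,a(pI)$; and those of determinant $1$, equivalent to $I$ and hence contributing $a(I)=1$, give the constant $p^{2k-4}$ with a multiplicity that evaluates to $h_2(p)$. Treating $p=2$ and $p\equiv3\imod{4}$ separately at this final stage is what makes $h_2$ differ from $h_1$.

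The main obstacle is entirely combinatorial and arithmetic: correctly enumerating the matrices generated by the Hecke coset decomposition, partitioning them into $GL_2(\mathbb{Z})$-equivalence classes, and pinning down each multiplicity. The arithmetic of whether $-1$ is a quadratic residue modulo $p$, together with the anomalous behaviour at $p=2$, is exactly what forces the $h_1,h_2$ dichotomies, so the delicate work lies in matching the representation counts against the explicit coefficient sums, keeping the bookkeeping consistent with the sums already appearing in Theorem \ref{thm_mult}(3).
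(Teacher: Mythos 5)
Your proposal is correct and follows essentially the same route as the paper: evaluate Andrianov's coefficient formula (via Corollaries \ref{cor_apN} and \ref{cor_ap2N}) at $N=I$, discard the terms whose divisibility conditions fail, identify the surviving matrices of determinant $1$ (resp.\ $p^2$ times such) with $I$ (resp.\ $pI$) up to $SL_2(\mathbb{Z})$-equivalence, and count solutions of $u^2\equiv -1 \imod{p^{\beta}}$ to obtain $h_1$ and $h_2$. The only difference is cosmetic: you justify $a(N')=a(I)$ by the class number one of discriminant $-4$, whereas the paper constructs the equivalence explicitly via sums of two squares in Lemma \ref{lem_STS}; both are valid.
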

\noindent Formulas for $\lambda_F(p)$ and $\lambda_F(p^2)$ in terms of $a\sm{1}{1/2}{1/2}{1}$ can be found in \cite{Sk}. 

As noted above, not all degree 2 Siegel eigenforms can be normalized with $a(I)=1$, so we have also evaluated the relevant eigenvalues in the case $a(0)\neq 0$.
\begin{theorem}\label{thm_lambdaA0}
Let $F(Z) = \sum_{N \geq 0} a(N) \exp \left({2 \pi i} \, \textup{Tr}(NZ) \right) \in M_k^2(\Gamma)$ be an eigenform with $a(0)\neq0$. Then for any prime p, the eigenvalues of index $p$ and $p^2$ associated to $F$ satisfy
\begin{equation*}
\lambda_F(p) = 1 +p^{k-1}+p^{k-2}+p^{2k-3},
\end{equation*}
and
\begin{equation*}
\lambda_F(p^2) = 1 +p^{k-2} (p+1) + p^{2k-4}(p^2+2p)+ p^{3k-5}(p+1) + p^{4k-6}.
\end{equation*}
\end{theorem}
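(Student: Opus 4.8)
The plan is to specialize the general Andrianov relations expressing $\lambda_F(p)\,a(N)$ and $\lambda_F(p^2)\,a(N)$ as weighted sums of Fourier coefficients (as recalled in Section~\ref{sec_Prelim}) to the single index $N=0$, and then use the hypothesis $a(0)\neq 0$ to divide through. The decisive simplification is that $0$ is the unique positive semidefinite matrix the Hecke action can produce from $0$: every source matrix $N'$ occurring in the relation for index $N$ arises as a congruence transform and rescaling of $N$, of the shape $\tfrac1p\,{}^tU N U$, $N$, or $pN$ for suitable $U$, so when $N=0$ each $N'$ collapses to $0$ and the only coefficient surviving on the right-hand side is $a(0)$ itself. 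Moreover, every integrality and congruence condition governing whether a given term is present becomes ``$0\equiv 0$'' and is automatically met, so---in contrast to the computation of Theorem~\ref{thm_lambdaAI} at $N=I$---no terms are suppressed and there is no dependence on $p\bmod 4$. Consequently both relations reduce to identities $\lambda_F(p)\,a(0)=W_p\,a(0)$ and $\lambda_F(p^2)\,a(0)=W_{p^2}\,a(0)$, where $W_p,W_{p^2}$ are the sums of the $p$-power weights attached to the contributing cosets.

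First I would treat $\lambda_F(p)$. Collecting weights gives the leading contribution $1$ (from the $a(pN)$ term), the full complement of $p+1$ intermediate cosets each of weight $p^{k-2}$, and the bottom contribution $p^{2k-3}$ (from the $a(N/p)$ term, which is present precisely because $N/p=0$ is an admissible index at $N=0$), so that $W_p=1+(p+1)p^{k-2}+p^{2k-3}=1+p^{k-1}+p^{k-2}+p^{2k-3}$. Dividing by $a(0)\neq0$ yields the stated formula for $\lambda_F(p)$.

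The same scheme applies to $\lambda_F(p^2)$, but the bookkeeping is heavier because $T(p^2)$ is assembled from two double cosets, so its action organizes into the five weight-layers $p^{0},p^{k-2},p^{2k-4},p^{3k-5},p^{4k-6}$ appearing in the theorem. With every congruence condition again trivially satisfied at $N=0$, I would count the number of contributing cosets in each layer---these multiplicities are $1,\,p+1,\,p^2+2p,\,p+1,\,1$ respectively---to obtain $W_{p^2}=1+p^{k-2}(p+1)+p^{2k-4}(p^2+2p)+p^{3k-5}(p+1)+p^{4k-6}$, and dividing by $a(0)$ gives the result.

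The main obstacle is the combinatorics of the middle layer for $T(p^2)$: verifying that exactly $p^2+2p$ cosets contribute at weight $p^{2k-4}$ requires a careful enumeration of the coset representatives of the two double cosets comprising $T(p^2)$, together with a check that each associated source matrix genuinely degenerates to $0$ at $N=0$ and that none is silently discarded by an integrality constraint. Everything else is routine once one isolates the two structural facts above---that all source matrices vanish and all congruence conditions become vacuous---which together show that the eigenvalues of any eigenform with $a(0)\neq0$ are forced by its constant term alone, and hence necessarily coincide with those of the degree~$2$ Siegel Eisenstein series.
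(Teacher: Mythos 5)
Your proposal is correct and follows essentially the same route as the paper: specialize Andrianov's coefficient formula (via Corollaries \ref{cor_apN} and \ref{cor_ap2N}) at $N=0$, observe that every source matrix collapses to $0$ and every divisibility condition becomes vacuous, sum the $p$-power weights, and divide by $a(0)\neq 0$. One bookkeeping caveat: in Andrianov's decomposition the ``middle layer'' is really two layers --- $p^{2}+p$ terms of weight $p^{2k-4}$ from $(\alpha,\beta,\gamma)=(0,2,0)$ plus a single term of weight $p^{2k-3}$ from $(\alpha,\beta,\gamma)=(1,0,1)$ --- which combine to the $(p^{2}+2p)\,p^{2k-4}$ you state, so the enumeration you flag as the main obstacle should target those two counts rather than ``$p^{2}+2p$ cosets at weight $p^{2k-4}$''.
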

\noindent It is well known (see section 2) that Siegel Eisenstein series are eigenforms and have $a(0)=1$ so the following corollary is immediate. 
\begin{cor}
Let $E_k$ be the Siegel Eisenstein series of weight $k$.
Then for any prime p, the eigenvalues of index $p$ and $p^2$ associated to $E_k$ satisfy
\begin{equation*}
\lambda_{E_k}(p) = 1 +p^{k-1}+p^{k-2}+p^{2k-3},
\end{equation*}
and
\begin{equation*}
\lambda_{E_k}(p^2) = 1 +p^{k-2} (p+1) + p^{2k-4}(p^2+2p)+ p^{3k-5}(p+1) + p^{4k-6}.
\end{equation*}
\end{cor}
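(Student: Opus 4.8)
The plan is to obtain the corollary as a direct specialization of Theorem \ref{thm_lambdaA0}, since that theorem already computes $\lambda_F(p)$ and $\lambda_F(p^2)$ for \emph{any} eigenform $F$ satisfying $a(0) \neq 0$, and the Siegel Eisenstein series is precisely such a form. The only work, therefore, is to confirm that $E_k$ meets the single hypothesis of that theorem; there is no independent computation to perform.

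First I would invoke the two standard facts recorded in Section \ref{sec_Prelim}: that the Siegel Eisenstein series $E_k$ of even weight $k \geq 4$ lies in $M_k^2(\Gamma)$ and is a simultaneous Hecke eigenform, and that it is normalized so that its constant term satisfies $a(0) = 1$. The first fact places $E_k$ in the class of eigenforms to which Theorem \ref{thm_lambdaA0} applies, and the second guarantees $a(0) \neq 0$, which is exactly the hypothesis of that theorem.

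Having checked the hypotheses, I would simply set $F = E_k$ in Theorem \ref{thm_lambdaA0}. A key feature of that theorem is that its formulas for $\lambda_F(p)$ and $\lambda_F(p^2)$ depend only on the prime $p$ and the weight $k$, and not on any other Fourier data of $F$. Consequently the expressions transfer verbatim to $\lambda_{E_k}(p)$ and $\lambda_{E_k}(p^2)$, yielding precisely the two displayed formulas of the corollary.

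I do not expect any genuine obstacle, as the entire arithmetic content already resides in Theorem \ref{thm_lambdaA0}. The only points requiring attention are bookkeeping ones: that $a(0) = 1$ is the correct normalization for $E_k$, and that the statement is understood for even weight $k \geq 4$ (for which the degree $2$ Siegel Eisenstein series is defined and nonzero), both of which are settled by the conventions established in Section \ref{sec_Prelim}.
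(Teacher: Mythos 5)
Your proposal is correct and matches the paper exactly: the paper states the corollary is immediate from Theorem \ref{thm_lambdaA0} once one notes (as recorded in Section \ref{sec_Prelim}) that $E_k$ is an eigenform with $a(0)=1$. Your verification of the hypotheses and direct specialization is precisely the intended argument.
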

\noindent The eigenvalues associated to Siegel Eisenstein series have previously been calculated by Walling in \cite{Wa} where the above formula for $\lambda_{E_k}(p)$ is a special case of Proposition 3.3 of \cite{Wa}. We note that Walling chooses to work with the operator $T_1(p^2)$ instead of $T(p^2)$ thus evaluating $\lambda_1(p^2)$ instead of $\lambda(p^2)$ as we have done.


\section{Siegel Modular Forms, Hecke Operators and Statement of Other Results}\label{sec_Prelim}

We start this section with a brief introduction to Siegel modular forms. Please see \cite{BGHZ,Kl} for further details. Let $\mathbb{A}^{m\times n}$ denote the set of all $m \times n$ matrices with entries in the set $\mathbb{A}$.  For a matrix $M$ we let ${^tM}$ denote its transpose; if $M$ is square, $\textup{Tr}(M)$ its trace and $\textup{Det}(M)$ its determinant; and if $M$ has entries in $\mathbb{C}$, $\textup{Im}(M)$ its imaginary part.
If a matrix $M \in \mathbb{R}^{n \times n}$ is positive definite, then we write $M > 0$, and if $M$ is positive semi-definite, we write $M \geq 0$.
The Siegel half-plane $\mathbb{H}^2$ of degree $2$ is defined by
$$\mathbb{H}^2:=  \left\{ Z \in \mathbb{C}^{2 \times 2} \mid {^tZ} = Z, \textup{Im}(Z)>0\right\}.$$
Let
$$\Gamma^2:=\textup{Sp}_{4}(\mathbb{Z})= \left\{ M \in \mathbb{Z}^{4 \times 4} \mid {^tM}JM = J \right\} ,\qquad
J=\sm{0}{I}{-I}{0},$$
be the Siegel modular group of degree $2$. We will often drop the superscript 2 if the degree is clear form the context.
The modular group $\Gamma^2$ acts on $\mathbb{H}^2$ via the operation
$$M \cdot Z = \left(AZ+B\right) \left(CZ+D\right)^{-1}$$
where
$M=\sm{A}{B}{C}{D} \in \Gamma^2$, $Z \in \mathbb{H}^2$.
A holomorphic function $F:\mathbb{H}^2 \to \mathbb{C}$ is called a Siegel modular form of degree $2$ and weight $k\in \mathbb{Z}^{+}$ on $\Gamma^2$ if
\[
F|_kM(Z):=\textup{Det}(CZ+D)^{-k} \, F(M \cdot Z)   =  F(Z)
\]
for all $M=\sm{A}{B}{C}{D} \in \Gamma^2$.  We note that the desired boundedness of $F|_k M(Z)$, for any $M \in \Gamma^2$, when $\textup{Im}(Z) - cI_2 \geq 0$, with fixed $c >0$, is automatically satisfied by the Koecher principle.
The set of all such modular forms is a finite dimensional vector space over $\mathbb{C}$, which we denote $M_k^2(\Gamma)$.
Every $F \in M_k^2(\Gamma)$ has a Fourier expansion of the form
$$F(Z) = \sum_{N \in \mathcal{R}^2} a(N) \exp \left({2 \pi i} \, \textup{Tr}(NZ) \right)$$
where $Z \in \mathbb{H}^2$ and
$$\mathcal{R}^2 = \left\{N=(N_{ij}) \in \mathbb{Q}^{2 \times 2} \mid {^tN} = N \geq 0, N_{ii}, 2 N_{ij} \in \mathbb{Z} \right\}.$$
We note that
\begin{equation}\label{for_UNTU}
a(UN{^tU})=\textup{Det}(U)^k \, a(N)
\end{equation} 
for $U \in GL_2(\mathbb{Z})$. In particular, if $U=\sm{0}{1}{-1}{0}$ then we get that 
\begin{equation}\label{for_ASwitch1}
\aN{r}{b/2}{b/2}{s} = \aN{s}{-b/2}{-b/2}{r}.
\end{equation}
Similarly, if $U=\sm{0}{1}{1}{0}$ then we get that 
\begin{equation*}\label{for_ASwitch2}
\aN{r}{b/2}{b/2}{s} = (-1)^k \, \aN{s}{b/2}{b/2}{r}.
\end{equation*}
Therefore, when $k$ is odd, $a(I)=0$.

We call $F \in M_k^2(\Gamma)$ a cusp form if $a(N)=0$ for all $N \not> 0$ and denote the space of such forms $S_k^2(\Gamma)$.

We define the Siegel Eisenstein series of weight $k$, denoted $E_k$, by
$$E_k(Z) := \sum_{(C,D)} \textup{Det}(CZ+D)^{-k}$$
where the sum is over non-associated, with respect to left multiplication by $GL_2(\mathbb{Z})$, pairs of coprime symmetric matrices $C, D \in \mathbb{Z}^{2 \times 2}.$ We note that $E_k \in M_k^2(\Gamma)$ for even $k \geq 4$ with Fourier coefficients \cite{EZ, Ko}
$$a_{k}(N) = a_{k} \sm{r}{b/2}{b/2}{s} = \frac{2}{\zeta(1-k) \zeta(3-2k) } \sum_{d \mid (r,b,s)} d^{k-1} \, H\left(k-1, \frac{4rs-b^2}{d^2}\right),$$
where $\zeta(\cdot)$ is the Riemann zeta function and $H(\cdot)$ is Cohen's generalized class number function \cite{Co}. In particular, $a_k(0)=1$ and $a_k(I) \neq 0$ for all even $k \geq 4$.

In \cite{AN}, Andrianov gives a very nice description of the Hecke theory for Siegel modular forms on the full modular group. We gave a brief summary here for the degree 2 case.

Let
$$S^{(2)}:= \left\{ M \in \mathbb{Z}^{4 \times 4} \mid {^tM}JM = r(M) \, J, \; r(M)=1,2,\cdots \right\} $$
Then every double coset $\Gamma M \Gamma$, with $M \in S^{(2)}$, can be written as union of finitely many right cosets of $\Gamma$ in $S^{(2)}$, i.e.,
$$\Gamma M \Gamma =  \bigcup_{i=1}^{\mu} \Gamma \sigma_i,$$
for some $\sigma_i \in S^{(2)}$, $\mu \in \mathbb{Z}^{+}$.
For each such double coset we associate an operator $T_k(\Gamma M \Gamma)$ which acts on $M_k^2(\Gamma)$ as follows. For $F \in M_k^2(\Gamma)$,
$$T_k(\Gamma M \Gamma) F := r(M)^{2k-3} \, \sum_{i=1}^{\mu} F |_k \sigma_i.$$
$T_k(\Gamma M \Gamma)$ is independent of the choice of representatives $\{ \sigma_i \}$ and maps $M_k^2(\Gamma)$ into itself. We call $F \in M_k^2(\Gamma)$ an eigenform if it is an eigenfunction for all the operators $T_k(\Gamma M \Gamma)$, $M \in S^{(2)}$. For all $k \geq 1$, $M_k^2(\Gamma)$ has a basis consisting of eigenforms. For even $k \geq 4$ the one-dimensional subspace of $M_k^2(\Gamma)$ generated by the Eisenstein series $E_k$ is invariant under the action of all the $T_k(\Gamma M \Gamma)$, and so the Eisenstein series $E_k$ are eigenforms.  

We now define the Hecke operator of index $m$ by the following finite sum:
$$T_k(m) := \sum_{r(M)=m} T_k(\Gamma M \Gamma).$$
Then
\begin{equation}\label{for_TCommute}
T_k(m) \, T_k(n) = T_k(n) \, T_k(m) = T_k(mn), \quad \textup{when } (m,n)=1.
\end{equation}
For $F \in M_k^2(\Gamma)$ an eigenform, we define its eigenvalues, $\lambda_F(m)$, by
$$T_k(m) \, F = \lambda_F(m) \, F.$$
We will refer to $\lambda_F(m)$ as the eigenvalue of index $m$ associated to $F$ and we note that these eigenvalues are real.

In \cite{AN}, Andrianov considers the Fourier coefficients of $T_k(m) \, F$, for $F \in M_k^2(\Gamma)$. Given (\ref{for_TCommute}) it suffices to study $T_k(p^{\delta}) \, F$, for $\delta \geq 1$. Let 
$$T_k(p^{\delta}) \, F (Z) = \sum_{N \in \mathcal{R}^2} a(p^{\delta}; N) \exp \left({2 \pi i} \, \textup{Tr}(NZ) \right).$$
Andrianov provides us with a formula for $a(p^{\delta}; N)$ in terms of $a(\cdot)$, the Fourier coefficients of $F$, which we state in Theorem \ref{thm_Andrianov} below. We first note that if $F$ is an eigenform, then for any $N \in \mathcal{R}^2$ we have the relation
\begin{equation}\label{eq_alambda}
a(N) \, \lambda(p^{\delta}) = a(p^{\delta}; N). 
\end{equation}
Let 
$$R(p^{\beta})=\left\{ \mat{u_1}{u_2}{u_3}{u_4} \in SL_2(\mathbb{Z}) \mid (u_1, u_2) \imod{p^{\beta}} \right\}$$
be any set of $2 \times 2$ integral matrices whose first row ranges over a complete set of representatives of the equivalence classes of relatively prime integers under the equivalence relation 
\begin{equation}\label{eq_EquivRel}
(u_1, u_2) \sim (u_1^{\prime}, u_2^{\prime}) \imod{p^{\beta}}
\Leftrightarrow au_1 \equiv u_1^{\prime}, au_2 \equiv u_2^{\prime} \imod{p^{\beta}},
\end{equation}
for some $a \in \left( \mathbb{Z} / p^{\beta} \mathbb{Z} \right)^{\times}$, and whose second rows are chosen so that $u_1 u_4 -u_2 u_3 = 1$.
For $N=\sm{r}{b/2}{b/2}{s}$, let $\sm{r_u}{b_u/2}{b_u/2}{s_u} = U N {^t}U$, for a given $U \in SL_2(\mathbb{Z})$.
\begin{theorem}[{Andrianov~\cite[(2.1.11)]{AN}}]\label{thm_Andrianov}
For $p$ prime and $N=\sm{r}{b/2}{b/2}{s}$, 
\begin{equation*}
a(p^{\delta}; N) =\sum_{\substack{\alpha+\beta+\gamma=\delta \\ \alpha, \beta, \gamma \geq 0}} p^{(k-2)\beta + (2k-3)\gamma}
\sum_{\substack{U \in R(p^{\beta}) \\ r_u \equiv 0 \imod{p^{\beta+\gamma}} \\ b_u \equiv s_u \equiv 0 \imod{p^{\gamma}}}} 
a \left( p^{\alpha} \sm{r_u p^{-\beta-\gamma}}{\frac{b_u}{2} p^{-\gamma}}{\frac{b_u}{2} p^{-\gamma}}{s_u p^{\beta-\gamma}} \right)
\end{equation*}
\end{theorem}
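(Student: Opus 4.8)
The plan is to prove the identity by computing $T_k(p^\delta)F$ directly from its definition on an explicit set of right-coset representatives of $\Gamma$ in $\{M\in S^{(2)}: r(M)=p^\delta\}$, and then reading off Fourier coefficients; this is the computation underlying Andrianov's formula. Since every such $M$ has $r(M)^{2k-3}=p^{(2k-3)\delta}$, the definition collapses to $T_k(p^\delta)F = p^{(2k-3)\delta}\sum_\sigma F|_k\sigma$, the sum being over the representatives $\sigma$. The first step is to choose these representatives in block-upper-triangular form $\sigma=\sm{A}{B}{0}{D}$. The symplectic similitude condition ${}^t\sigma J\sigma = p^\delta J$ forces ${}^tAD = p^\delta I$ (so $A = p^\delta({}^tD)^{-1}$) together with ${}^tBD$ symmetric; one then lets $D$ run over representatives whose Smith normal form is $\textup{diag}(p^\alpha,p^{\alpha+\beta})$ with $0\le\alpha$, $0\le\beta$, $\alpha+\beta\le\delta$, and $B$ run over a complete set of residues modulo the lattice determined by $D$ subject to the symmetry of ${}^tBD$.

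Next I would substitute the Fourier expansion $F(Z)=\sum_{N'\in\mathcal{R}^2}a(N')\exp(2\pi i\,\textup{Tr}(N'Z))$, renaming the index to $N'$ so that $N$ is reserved for the target. Each coset contributes
$$F|_k\sigma(Z)=\textup{Det}(D)^{-k}\sum_{N'}a(N')\,\exp\!\left(2\pi i\,\textup{Tr}\!\left(N'(AZ+B)D^{-1}\right)\right).$$
Using $A=p^\delta({}^tD)^{-1}$ and the symmetry of $Z$, the $Z$-linear part is $\textup{Tr}\!\left(p^\delta D^{-1}N'({}^tD)^{-1}Z\right)$, so reading off the coefficient of $\exp(2\pi i\,\textup{Tr}(NZ))$ identifies the contributing Fourier index of $F$ as $N'=p^{-\delta}DN\,{}^tD$. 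The finite sum over the residues $B$ then produces the character sum $\sum_B\exp(2\pi i\,\textup{Tr}(N'BD^{-1}))$, which by orthogonality vanishes unless an integrality condition on $N'$ holds and otherwise contributes the number of residues, a fixed power of $p$. Hence only those $D$ for which $p^{-\delta}DN\,{}^tD$ is admissible survive, each contributing $a\!\left(p^{-\delta}DN\,{}^tD\right)$ with an explicit $p$-power weight.

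The third step is the bookkeeping that converts this into the stated double sum. Factoring $D=D_0U^{-1}$ with $U\in GL_2(\mathbb{Z})$ and $D_0=\textup{diag}(p^\alpha,p^{\alpha+\beta})$, the surviving index becomes $p^{-\delta}D_0\,(UN\,{}^tU)\,{}^tD_0$; writing $UN\,{}^tU=\sm{r_u}{b_u/2}{b_u/2}{s_u}$ and setting $\gamma:=\delta-\alpha-\beta\ge0$ puts this in the displayed shape $p^{\alpha}\sm{r_up^{-\beta-\gamma}}{\tfrac{b_u}{2}p^{-\gamma}}{\tfrac{b_u}{2}p^{-\gamma}}{s_up^{\beta-\gamma}}$. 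The residual $GL_2(\mathbb{Z})$-freedom in $U$, modulo the stabilizer of the normal form, is exactly the set $R(p^\beta)$ acting by $N\mapsto UN\,{}^tU$, while the integrality surviving the $B$-sum becomes the congruences $r_u\equiv0\pmod{p^{\beta+\gamma}}$ and $b_u\equiv s_u\equiv0\pmod{p^\gamma}$. Finally, collecting $\textup{Det}(D)^{-k}=p^{-k\delta}$, the global factor $p^{(2k-3)\delta}$, and the power of $p$ from the $B$-count, then substituting $\delta=\alpha+\beta+\gamma$, collapses the overall weight to $p^{(k-2)\beta+(2k-3)\gamma}$.

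The main obstacle is steps two and three taken together. The difficulty is not analytic but combinatorial: producing a genuinely non-redundant set of symplectic coset representatives, and then tracking the $GL_2(\mathbb{Z})$-equivalence, the $B$-character sum, and the elementary-divisor data simultaneously so that each admissible index is counted with the correct multiplicity and emerges with exactly the entries shown. The symplectic normal-form combinatorics for $GSp_4(\mathbb{Z})$ is substantially heavier than the $GL_2$ (elliptic) case, and reconciling the three parameters $(\alpha,\beta,\gamma)$ and the set $R(p^\beta)$ with the precise congruences is where essentially all the work lies.
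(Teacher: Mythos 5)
The first thing to say is that the paper does not prove this statement at all: Theorem \ref{thm_Andrianov} is imported verbatim from Andrianov \cite[(2.1.11)]{AN}, with the remark that it is the level-one case of \cite[Proposition 5.16]{AN3}. So there is no in-paper proof to compare against; what you have sketched is, in outline, Andrianov's own derivation. Your skeleton is sound as far as it goes: for $M=\sm{A}{B}{0}{D}$ the condition ${}^tMJM=p^{\delta}J$ does give ${}^tAD=p^{\delta}I$ and ${}^tBD$ symmetric; reading off the coefficient of $\exp(2\pi i\,\textup{Tr}(NZ))$ does pick out $N'=p^{-\delta}DN\,{}^tD$; and the exponent bookkeeping is consistent (e.g.\ for $\delta=1$, $D=pI$ one gets $p^{2k-3}\cdot\textup{Det}(D)^{-k}\cdot p^{3}=p^{0}$ multiplying $a(pN)$, matching the $(\alpha,\beta,\gamma)=(1,0,0)$ term, and for $D=\textup{diag}(p,p)$ with $\delta=2$ one gets $p^{2(2k-3)}p^{-2k}p^{3}=p^{2k-3}$, matching the $(\alpha,\beta,\gamma)=(1,0,1)$ term).

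However, as a proof the proposal has a genuine gap, which you candidly flag yourself: all of the load-bearing steps are asserted rather than carried out. Concretely: (i) you never construct a verified non-redundant set of coset representatives, i.e.\ you do not show that the $(D,B)$ pairs you describe exhaust $\Gamma\backslash\{M: r(M)=p^{\delta}\}$ without repetition; (ii) the $B$-character sum is not evaluated --- and this is not a routine orthogonality step, because the resulting congruences $r_u\equiv 0\ \imod{p^{\beta+\gamma}}$, $b_u\equiv s_u\equiv 0\ \imod{p^{\gamma}}$ are \emph{strictly stronger} than integrality of the final index (for $(\alpha,\beta,\gamma)=(1,0,1)$ the index is $N$ itself, always admissible, yet the term survives only when $p\mid(r,b,s)$; compare the $\delta_p(r,b,s)\,p^{2k-3}\,a(N)$ term in Corollary \ref{cor_ap2N}), so a purely ``admissibility'' argument would produce a wrong formula; (iii) the stabilizer computation showing that the residual $GL_2(\mathbb{Z})$-freedom modulo $\textup{diag}(p^{\alpha},p^{\alpha+\beta})$ is parametrized exactly by $R(p^{\beta})$, with each surviving index counted once, is only named, not done; and (iv) there is a small but real inconsistency in your normalization: with $D=D_0U^{-1}$ the surviving index is $p^{-\delta}D_0\bigl(U^{-1}N\,{}^t(U^{-1})\bigr){}^tD_0$, so to get the displayed $p^{\alpha}\sm{r_up^{-\beta-\gamma}}{\frac{b_u}{2}p^{-\gamma}}{\frac{b_u}{2}p^{-\gamma}}{s_up^{\beta-\gamma}}$ with $UN\,{}^tU=\sm{r_u}{b_u/2}{b_u/2}{s_u}$ you must take $D=D_0U$. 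In short, the route is the correct (and the classical) one, but what you have written is a plan for the proof in \cite{AN}/\cite{AN3}, not a proof; the combinatorial heart that you identify as ``where essentially all the work lies'' is precisely what remains to be supplied.
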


\noindent Theorem \ref{thm_Andrianov} corresponds to the level one case of Proposition 5.16 in \cite{AN3}\noindent. To prove our main results we will need to evaluate and simplify the result in Theorem \ref{thm_Andrianov} under certain circumstances. Corollaries (\ref{cor_apNRed}) - (\ref{cor_ap2N}) are the results of these efforts.  Let $\delta_x(y_1, y_2, \dots, y_n)$ equal 1 if $x$ divides each of $y_1, y_2, \dots y_n$, and zero otherwise.

\begin{cor}\label{cor_apNRed}
For $p$ prime and $N=\sm{r}{b/2}{b/2}{s}$ with $s \not\equiv 0 \pmod{p}$,
\begin{equation*}
a(p^{\delta}; N) = a(p^{\delta}N)
+\sum_{\beta=1}^{\delta} p^{(k-2)\beta}
\sum_{\substack{u=0 \\ r+bu+su^2 \equiv 0 \, (p^{\beta})}}^{p^{\beta}-1}
a \left( p^{\delta - \beta} \sm{(r+bu+su^2) p^{-\beta}}{b/2 +su}{b/2+su}{s p^{\beta}} \right)
\end{equation*}
\end{cor}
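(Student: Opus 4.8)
The plan is to start from Andrianov's formula (Theorem~\ref{thm_Andrianov}) and show that the hypothesis $s\not\equiv0\pmod p$ collapses the triple sum. Write $Q(x,y)=rx^2+bxy+sy^2$ for the binary quadratic form attached to $N$, so that a direct computation of $UN{}^tU$ for $U=\sm{u_1}{u_2}{u_3}{u_4}$ gives $r_u=Q(u_1,u_2)$, $s_u=Q(u_3,u_4)$, and $b_u=Q(u_1+u_3,u_2+u_4)-Q(u_1,u_2)-Q(u_3,u_4)$, the last being the polarization of $Q$ on the two rows $v=(u_1,u_2)$, $w=(u_3,u_4)$ of $U$.

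The main step is to show that every term with $\gamma\ge1$ vanishes. If $\gamma\ge1$, then since $\beta+\gamma\ge1$ the summation conditions force $r_u\equiv s_u\equiv b_u\equiv0\pmod p$, i.e.\ $Q(v)\equiv Q(w)\equiv0$ and the polarization of $Q$ on $v,w$ is $\equiv0\pmod p$. Using the polynomial identity $Q(xv+yw)=x^2Q(v)+xy\,b_u+y^2Q(w)$, it follows that $Q$ vanishes at every $\mathbb{F}_p$-combination of $v$ and $w$. Since $\det U=1$, the rows $v,w$ span $\mathbb{F}_p^2$, so in particular $s=Q(0,1)\equiv0\pmod p$, contradicting the hypothesis. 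This argument is uniform in $p$; it covers $p=2$ with no separate case because it uses the quadratic form directly rather than its Gram matrix. Hence only $\gamma=0$ survives, which forces $\alpha=\delta-\beta$ and reduces the inner conditions to the single requirement $r_u\equiv0\pmod{p^\beta}$, the congruences on $b_u,s_u$ now being vacuous; the summand becomes $p^{(k-2)\beta}\,a\bigl(p^{\delta-\beta}\sm{r_up^{-\beta}}{b_u/2}{b_u/2}{s_up^{\beta}}\bigr)$.

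Next I would parametrize the surviving classes in $R(p^\beta)$. The first-row computation shows that if $(u_1,u_2)$ is coprime with $Q(u_1,u_2)\equiv0\pmod{p^\beta}$, then $u_1$ must be a unit modulo $p$: otherwise $Q(u_1,u_2)\equiv su_2^2\pmod p$ would force $p\mid u_2$ as well, contradicting coprimality. Consequently each contributing class $(u_1:u_2)\in\mathbb{P}^1(\mathbb{Z}/p^\beta)$ has a unique representative with first row $(1,u)$, $u\in\{0,1,\dots,p^\beta-1\}$, and the condition $r_u\equiv0\pmod{p^\beta}$ becomes $Q(1,u)=r+bu+su^2\equiv0\pmod{p^\beta}$; classes with $u_1$ a non-unit fail this condition and drop out. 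Since Andrianov's formula is valid for any admissible choice of representatives, I may take the second row to be $(0,1)$, i.e.\ $U=\sm{1}{u}{0}{1}$, which yields $r_u=r+bu+su^2$, $b_u/2=b/2+su$, and $s_u=s$.

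Substituting these values finishes the proof: the $\beta=0$ term is the single class $U=I$ and contributes $a(p^\delta N)$, while for each $\beta\ge1$ the sum over contributing $U$ becomes the stated sum over $u$ with $r+bu+su^2\equiv0\pmod{p^\beta}$, with summand $a\bigl(p^{\delta-\beta}\sm{(r+bu+su^2)p^{-\beta}}{b/2+su}{b/2+su}{sp^\beta}\bigr)$. I expect the vanishing of the $\gamma\ge1$ terms to be the crux; the remaining work—verifying the entrywise formulas for $UN{}^tU$ and checking that the chosen representatives exhaust the contributing classes exactly once—is routine. The one point needing mild care is the uniform treatment of $p=2$, which the polarization identity dispatches cleanly, avoiding a Gram-matrix argument that would otherwise break down in characteristic $2$.
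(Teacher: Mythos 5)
Your proof is correct, and it follows the same overall route as the paper---a direct collapse of Andrianov's triple sum under the hypothesis $s\not\equiv 0\pmod p$---but it handles the pivotal step by a genuinely different argument. The paper never isolates ``the $\gamma\ge 1$ terms vanish'' as a standalone claim: it first shows (exactly as you do in your second stage) that for $\beta\ge1$ the condition $r_u\equiv 0\pmod{p^{\beta}}$ together with $p\nmid s$ forces $p\nmid u_1$, so the representatives may be taken of the form $\sm{1}{u}{0}{1}$, and then simply reads off that $s_u=s$ for these representatives (and for $U=I$ when $\beta=0$), whence the condition $s_u\equiv 0\pmod{p^{\gamma}}$ forces $\gamma=0$. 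You instead kill $\gamma\ge 1$ a priori via the polarization identity $Q(xv+yw)=x^2Q(v)+xy\,b_u+y^2Q(w)$: if $p$ divides $r_u$, $b_u$ and $s_u$, then $Q$ vanishes on the $\mathbb{F}_p$-span of the rows of $U$, which is all of $\mathbb{F}_p^2$ since $\det U=1$, forcing $s=Q(0,1)\equiv 0\pmod p$, a contradiction. Both arguments are sound and all your supporting computations (the entrywise formulas for $UN\,{}^tU$, the uniqueness of the representatives $(1,u)$, the freedom in choosing second rows) check out against Andrianov's statement. Your version is representative-independent and makes transparent that the hypothesis on $s$ is precisely what prevents the form from degenerating mod $p$; the paper's version is shorter because the single observation $s_u=s$ does double duty. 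Your remark about $p=2$ is well taken in principle but moot here, since the paper also works with the entries of $UN\,{}^tU$ directly and needs no separate even-prime case either.
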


\begin{cor}\label{cor_apmI}
For $p$ prime and $m \in \mathbb{Z}^{+}$ such that $(m,p)=1$,
\begin{equation*}
a(p^{\delta}; mI) = a(mp^{\delta} I)
+
\begin{cases}
2 \displaystyle\sum_{\beta=1}^{\delta} p^{(k-2)\beta} \, a(mp^{\delta-\beta}I) & \textup{ if } p\equiv 1 \pmod{4}\\[15pt]
p^{k-2} \, a(mp^{\delta-1}I)& \textup{ if } p=2\\[6pt]
0 & \textup{ if } p\equiv 3 \pmod{4}.
\end{cases}
\end{equation*}
\end{cor}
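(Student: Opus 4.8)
The plan is to specialize Corollary~\ref{cor_apNRed} to $N=mI$, i.e.\ to $r=s=m$ and $b=0$. Since $(m,p)=1$ we have $s=m\not\equiv 0\pmod{p}$, so Corollary~\ref{cor_apNRed} applies and gives
$$a(p^{\delta};mI)=a(mp^{\delta}I)+\sum_{\beta=1}^{\delta}p^{(k-2)\beta}\sum_{\substack{u=0\\ m(1+u^2)\equiv 0\,(p^{\beta})}}^{p^{\beta}-1} a\left(p^{\delta-\beta}\sm{m(1+u^2)p^{-\beta}}{mu}{mu}{mp^{\beta}}\right).$$
Because $(m,p)=1$, the congruence $m(1+u^2)\equiv 0\pmod{p^{\beta}}$ reduces to $u^2\equiv -1\pmod{p^{\beta}}$, so the inner sum ranges exactly over the solutions of this quadratic congruence. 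Two tasks then remain: to identify the Fourier coefficient attached to each admissible $u$, and to count the solutions of $u^2\equiv -1\pmod{p^{\beta}}$.

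For the first task I would set $M_u:=\sm{m(1+u^2)p^{-\beta}}{mu}{mu}{mp^{\beta}}$ and observe that $M_u$ has integer entries (as $p^{\beta}\mid 1+u^2$) and determinant $m^2$, and that it equals $m$ times the Gram matrix $\sm{(1+u^2)p^{-\beta}}{u}{u}{p^{\beta}}$ of a binary quadratic form of discriminant $-4$. Since the class number $h(-4)=1$ and any form of discriminant $-4$ is automatically primitive, this form is properly equivalent to $x^2+y^2$; hence there is $U\in SL_2(\mathbb{Z})$ with $U\,\sm{(1+u^2)p^{-\beta}}{u}{u}{p^{\beta}}\,{^tU}=I$, and therefore $U M_u\,{^tU}=mI$. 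Scaling by $p^{\delta-\beta}$ gives $U\,(p^{\delta-\beta}M_u)\,{^tU}=mp^{\delta-\beta}I$, and applying \eqref{for_UNTU} with $\det U=1$ yields $a(p^{\delta-\beta}M_u)=a(mp^{\delta-\beta}I)$. Thus every term in the inner sum collapses to the single value $a(mp^{\delta-\beta}I)$, and the inner sum equals this value times the number of admissible $u$.

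For the second task I would count the solutions of $u^2\equiv -1\pmod{p^{\beta}}$ by elementary means. When $p\equiv 1\pmod{4}$, $-1$ is a quadratic residue mod $p$ and, since the derivative $2u$ is a unit, Hensel's lemma lifts this to exactly two solutions mod $p^{\beta}$ for every $\beta\geq 1$, producing the factor $2$. When $p\equiv 3\pmod{4}$, $-1$ is a nonresidue mod $p$, so there are no solutions for any $\beta$, giving $0$. When $p=2$, the congruence has the single solution $u=1$ for $\beta=1$ and none for $\beta\geq 2$ (as $-1\equiv 3\pmod{4}$ is not a square mod $4$), so only the $\beta=1$ term survives, with coefficient $1$, yielding $2^{k-2}a(m2^{\delta-1}I)$. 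Assembling the three cases with the leading term $a(mp^{\delta}I)$ gives the stated formula.

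I expect the main obstacle to be the equivalence step: one must verify carefully that $M_u$ is integral and that its quadratic form, though written in a skewed basis, genuinely lies in the principal class of discriminant $-4$. The cleanest route is the class-number-one fact invoked above, but the equivalence could also be exhibited explicitly by reducing the form via the Euclidean algorithm on the pair $(mp^{\beta},mu)$. Once the reduction of the congruence condition to $u^2\equiv -1\pmod{p^{\beta}}$ is in place, the solution count is routine.
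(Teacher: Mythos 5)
Your proof is correct and follows the same skeleton as the paper's: specialize Corollary~\ref{cor_apNRed} to $N=mI$, reduce the congruence to $u^2\equiv -1\pmod{p^{\beta}}$, show each surviving Fourier coefficient equals $a(mp^{\delta-\beta}I)$, and count solutions by quadratic reciprocity for $-1$ plus Hensel lifting. The one genuine divergence is the equivalence step. The paper proves the existence of $S\in SL_2(\mathbb{Z})$ with $S\,{^tS}=\sm{(1+u^2)/p^{\beta}}{u}{u}{p^{\beta}}$ by an explicit construction (Lemma~\ref{lem_STS}), built on a careful choice of a representation $p^{\beta}=x_{\beta}^2+y_{\beta}^2$ with $p\nmid x_{\beta}y_{\beta}$ (Proposition~\ref{prop_sumofsquares}); this occupies a nontrivial portion of Section~\ref{sec_Proofs} and is reused in the proofs of Theorems~\ref{thm_mult} and~\ref{thm_lambdaAI}. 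You instead observe that the Gram matrix is integral, positive definite, primitive, and of discriminant $-4$, and invoke $h(-4)=1$ to conclude it is properly equivalent to $I$; your primitivity remark is right since a common divisor $2$ would force an impossible discriminant $-1$. Your route is shorter and more conceptual, and it also absorbs the $p=2$, $\beta=1$ case (the matrix $\sm{1}{1}{1}{2}$) uniformly, where the paper conjugates by the explicit $U=\sm{1}{0}{1}{1}$. What the paper's construction buys is self-containedness and an explicit matrix $S$; what yours buys is brevity at the cost of citing the classical class number computation. Both are complete.
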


\begin{cor}\label{cor_apN}
For $p$ prime and $N=\sm{r}{b/2}{b/2}{s}$,
\begin{multline*}
a(p; N) = a(pN)
+p^{k-2}
\left[\sum_{\substack{u=0 \\ r+bu+su^2 \equiv 0 \, (p)}}^{p-1}
\aN{(r+bu+su^2)p^{-1}}{b/2 +su}{b/2+su}{s p} 
+ \delta_p(s) \, \aN{rp}{b/2}{b/2}{sp^{-1}}
\right] \\
+ \delta_p(r,b,s)\, p^{2k-3} \, a(p^{-1}N).
\end{multline*}
\end{cor}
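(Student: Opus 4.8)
The plan is to specialize Andrianov's formula (Theorem~\ref{thm_Andrianov}) to $\delta = 1$ and evaluate the resulting terms individually. For $\delta=1$ the outer sum ranges over the three triples $(\alpha,\beta,\gamma)\in\{(1,0,0),(0,1,0),(0,0,1)\}$, and I would show that these contribute, respectively, the term $a(pN)$, the bracketed $p^{k-2}$-sum, and the final $p^{2k-3}$-term.

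The two triples with $\beta=0$ are essentially immediate. In these cases $R(p^0)=R(1)$ is a single class under the relation~(\ref{eq_EquivRel}), so one may take $U=I$, giving $(r_u,b_u,s_u)=(r,b,s)$. For $(\alpha,\beta,\gamma)=(1,0,0)$ the prefactor is $1$, the congruence conditions are vacuous, and the summand is $a(pN)$. For $(\alpha,\beta,\gamma)=(0,0,1)$ the prefactor is $p^{2k-3}$, the conditions $r_u\equiv b_u\equiv s_u\equiv 0\imod{p}$ collapse to $p\mid(r,b,s)$, i.e.\ the indicator $\delta_p(r,b,s)$, and the summand is $a(p^{-1}N)$.

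The substantive triple is $(\alpha,\beta,\gamma)=(0,1,0)$, carrying the prefactor $p^{k-2}$. Here the inner sum runs over $U\in R(p)$, whose first rows form a set of $p+1$ representatives for $\mathbb{P}^1(\mathbb{F}_p)$; I would use the explicit reps $U=\sm{1}{u}{0}{1}$ for $0\le u\le p-1$ together with $U=\sm{0}{1}{-1}{0}$. Writing $UN{^t}U$ as the transform of the quadratic form $Q(x,y)=rx^2+bxy+sy^2$ attached to $N$, the first type gives $r_u=Q(1,u)=r+bu+su^2$, $b_u/2=b/2+su$, and $s_u=s$, so the condition $r_u\equiv 0\imod{p}$ reproduces the constraint in the displayed sum and the summand is exactly $\aN{(r+bu+su^2)p^{-1}}{b/2+su}{b/2+su}{sp}$. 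The point-at-infinity rep gives $r_u=s$, $b_u/2=-b/2$, $s_u=r$, so its congruence condition is $s\equiv 0\imod{p}$, i.e.\ $\delta_p(s)$, and its summand $\aN{sp^{-1}}{-b/2}{-b/2}{rp}$ becomes $\aN{rp}{b/2}{b/2}{sp^{-1}}$ after applying the symmetry~(\ref{for_ASwitch1}).

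The main obstacle is the $\beta=1$ bookkeeping: selecting clean coset representatives for $R(p)$, correctly computing the action $N\mapsto UN{^t}U$ on each, and recognizing that the point-at-infinity contribution must be reshaped through the reflection~(\ref{for_ASwitch1}) to match the stated form. I would also verify that the chosen representatives are pairwise inequivalent under~(\ref{eq_EquivRel}) and exhaust all $p+1$ classes, ensuring no summand is omitted or double-counted.
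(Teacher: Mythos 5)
Your proposal is correct and follows essentially the same route as the paper: both split Andrianov's formula at $\delta=1$ into the three triples $(\alpha,\beta,\gamma)$, use the explicit representatives of $R(p^0)$ and $R(p^1)$ from Lemma~\ref{lem_RSets}, compute $UN{^t}U$ for each, and apply the symmetry~(\ref{for_ASwitch1}) to the point-at-infinity term. The computations you sketch match the paper's in every detail.
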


\begin{cor}\label{cor_ap2N}
For $p$ prime and $N=\sm{r}{b/2}{b/2}{s}$,
\begin{multline*}
a(p^2; N) = a(p^2N)
+ p^{2k-3} \, \delta_p(r,b,s) \, a(N)
+ p^{4k-6}  \, \delta_{p^2}(r,b,s) \, a(p^{-2}N)\\
+p^{k-2}
\left[\sum_{\substack{u=0 \\ r+bu+su^2 \equiv 0 \, (p)}}^{p-1}
a\left(p \sm{(r+bu+su^2)p^{-1}}{b/2 +su}{b/2+su}{s p} \right)
+ \delta_p(s) \, a \left(p \sm{rp}{b/2}{b/2}{sp^{-1}} \right)
\right]\\
+p^{2k-4}
\left[
\sum_{\substack{u=0 \\ r+bu+su^2 \equiv 0 \, (p^2)}}^{p^2-1}
\aN{(r+bu+su^2)p^{-2}}{b/2 +su}{b/2 +su}{s p^2} 
+\sum_{\substack{u=0 \\ bup+s \equiv 0 \, (p^2)}}^{p-1}
\aN{r p^2}{rup+b/2}{rup+b/2}{ru^2+(bup+s)p^{-2}} 
\right] \\
+p^{3k-5}
\left[\sum_{\substack{u=0 \\ r+bu+su^2 \equiv 0 \, (p^2) \\ b \equiv s \equiv 0 (p)}}^{p-1}
\aN{(r+bu+su^2)p^{-2}}{(b/2 +su) p^{-1}}{(b/2 +su)p^{-1}}{s}
+ \delta_p(r,b) \, \, \delta_{p^2}(s) \, \aN{r}{\frac{b}{2} p^{-1}}{\frac{b}{2} p^{-1}}{s p^{-2}}
\right].
\end{multline*}
\end{cor}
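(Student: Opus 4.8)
The plan is to specialize Andrianov's formula (Theorem~\ref{thm_Andrianov}) to $\delta = 2$ and carry out the indicated inner sums explicitly. Setting $\delta = 2$, the outer sum runs over the six triples $(\alpha,\beta,\gamma)$ with $\alpha+\beta+\gamma = 2$, namely $(2,0,0)$, $(1,1,0)$, $(1,0,1)$, $(0,2,0)$, $(0,1,1)$, $(0,0,2)$, carrying the prefactors $1$, $p^{k-2}$, $p^{2k-3}$, $p^{2k-4}$, $p^{3k-5}$, $p^{4k-6}$ respectively. These prefactors already line up with the six groups of terms in the statement, so the task reduces to evaluating, for each triple, the inner sum over $U \in R(p^{\beta})$ subject to $r_u \equiv 0 \pmod{p^{\beta+\gamma}}$ and $b_u \equiv s_u \equiv 0 \pmod{p^{\gamma}}$.

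The basic computational input is the transformation of the quadratic form under $U = \sm{u_1}{u_2}{u_3}{u_4}$: writing $UN{}^tU = \sm{r_u}{b_u/2}{b_u/2}{s_u}$, one has $r_u = r u_1^2 + b u_1 u_2 + s u_2^2$, $s_u = r u_3^2 + b u_3 u_4 + s u_4^2$, and $b_u = 2r u_1 u_3 + b(u_1 u_4 + u_2 u_3) + 2s u_2 u_4$. Since $r_u$ depends only on the first row of $U$, the congruence on $r_u$ can be read off once representatives are fixed, and the second row, chosen so that $\det U = 1$, then determines $b_u$ and $s_u$. For the triples with $\beta = 0$ the matrix $U$ is forced to the identity, and the three terms $a(p^2 N)$, $p^{2k-3}\,\delta_p(r,b,s)\,a(N)$, and $p^{4k-6}\,\delta_{p^2}(r,b,s)\,a(p^{-2}N)$ fall out immediately.

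For $\beta = 1$ I would take the standard $p+1$ representatives of $R(p)$, with first rows $(1,u)$ for $u = 0,\dots,p-1$ and $(0,1)$. Substituting these into the formulas above---and, when $\gamma = 1$, imposing the extra congruences $b_u \equiv s_u \equiv 0 \pmod{p}$ that collapse the constraints to $b \equiv s \equiv 0 \pmod{p}$---reproduces the $p^{k-2}$ and $p^{3k-5}$ terms, where in each case the $(0,1)$ representative yields an argument that must be brought into the stated shape using the symmetry relation (\ref{for_ASwitch1}) coming from (\ref{for_UNTU}). This is exactly the mechanism already needed for Corollary~\ref{cor_apN}, so these two triples can be handled in parallel with the $\delta = 1$ computation.

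The main obstacle is the $\beta = 2$ triple $(0,2,0)$ giving the $p^{2k-4}$ term, since $R(p^2)$ has $p^2 + p$ elements and the congruence $r_u \equiv 0 \pmod{p^2}$ is more delicate. Here I would split the representatives into first rows $(1,u)$ with $u = 0,\dots,p^2-1$ and first rows $(pu,1)$ with $u = 0,\dots,p-1$. The first family, paired with second row $(0,1)$, gives $r_u = r + bu + su^2$ and directly produces the first sum in the $p^{2k-4}$ bracket. For the second family the condition $r_u \equiv 0 \pmod{p^2}$ simplifies to $bup + s \equiv 0 \pmod{p^2}$, the term $r p^2 u^2$ vanishing modulo $p^2$; after choosing the second row to normalize $\det U = 1$ and applying (\ref{for_ASwitch1}), one recovers the second sum, using that $(bup+s)p^{-2}$ is integral precisely under this congruence. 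Throughout, independence of the sums from the choice of coset representatives is guaranteed by (\ref{for_UNTU}), which is what makes the reindexing legitimate.
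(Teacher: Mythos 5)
Your proposal is correct and follows essentially the same route as the paper: specialize Theorem~\ref{thm_Andrianov} to $\delta=2$, treat the six triples $(\alpha,\beta,\gamma)$ separately with the representative sets $R(p^0)=\{I\}$, $R(p)=\{\sm{1}{u}{0}{1}\}\cup\{\sm{0}{1}{-1}{0}\}$, $R(p^2)=\{\sm{1}{u}{0}{1}\}\cup\{\sm{up}{1}{-1}{0}\}$ (which the paper isolates as Lemma~\ref{lem_RSets}), and use (\ref{for_ASwitch1}) to normalize the terms coming from the representatives with first row not of the form $(1,u)$. The only thing you leave implicit that the paper spells out is the verification that these are indeed complete, pairwise inequivalent systems of representatives for the relation (\ref{eq_EquivRel}).
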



\section{Proofs}\label{sec_Proofs}
We will first need the following lemmas. The first of which examines the sets $R(p^{\beta})$, for $\beta=0,1,2$, which appear in Theorem \ref{thm_Andrianov}. 
\begin{lemma}\label{lem_RSets}
We can choose $R(p^0)$, $R(p^1)$ and $R(p^2)$ as follows:
\begin{align*}
R(p^0) &= \{ \sm{1}{0}{0}{1} \} ;\\[3pt]
R(p^1) &= \{ \sm{1}{u}{0}{1} \mid u=0,1,\cdots, p-1\} \cup \{\sm{0}{1}{-1}{0} \}; and\\[3pt]
R(p^2) &= \{ \sm{1}{u}{0}{1} \mid u=0,1,\cdots, p^2-1\} 
\cup \{ \sm{up}{1}{-1}{0} \mid u=0,\cdots, p-1\} .
\end{align*}
\end{lemma}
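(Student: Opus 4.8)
The plan is to recognize that the first rows of the matrices in $R(p^\beta)$ classify coprime integer pairs $(u_1,u_2)$ up to the relation in \eqref{eq_EquivRel}, which is exactly scaling by a unit of $\mathbb{Z}/p^\beta\mathbb{Z}$ after reduction mod $p^\beta$. Hence the equivalence classes are in bijection with the projective line $\mathbb{P}^1(\mathbb{Z}/p^\beta\mathbb{Z})$, which has a single element when $\beta = 0$ and $p^{\beta-1}(p+1)$ elements when $\beta \geq 1$. For each $\beta \in \{0,1,2\}$ I would verify four things: (a) each listed first row is a coprime pair, so that a completing second row exists; (b) the listed first rows are pairwise inequivalent under \eqref{eq_EquivRel}; (c) their number equals $|\mathbb{P}^1(\mathbb{Z}/p^\beta\mathbb{Z})|$, forcing them to exhaust all classes; and (d) the stated second rows produce determinant $1$, so that the matrices lie in $SL_2(\mathbb{Z})$. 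Point (d) is immediate, since $\det\sm{1}{u}{0}{1} = \det\sm{0}{1}{-1}{0} = \det\sm{up}{1}{-1}{0} = 1$.

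The cases $\beta = 0$ and $\beta = 1$ are quick. For $\beta = 0$ all pairs collapse to a single class mod $1$, represented by the identity. For $\beta = 1$, a coprime pair reduced mod $p$ has at least one coordinate that is a unit: if $p \nmid u_1$ I scale by $u_1^{-1}$ to reach $(1, u_1^{-1}u_2) \equiv (1,u)$, and if $p \mid u_1$ then $p \nmid u_2$ and I scale by $u_2^{-1}$ to reach $(0,1)$. Inequivalence follows because $(1,u) \sim (1,u')$ forces the scaling unit to be $\equiv 1 \pmod p$, whence $u \equiv u'$, while $(0,1)$ cannot be scaled to have first coordinate a unit. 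This yields exactly the $p+1$ classes.

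The case $\beta = 2$ is the main one and the only place needing care. I would run the same normalization: if $p \nmid u_1$ then $u_1$ is a unit mod $p^2$ and the pair is equivalent to some $(1,u)$ with $u \in \{0,\dots,p^2-1\}$; if $p \mid u_1$ then $u_2$ is a unit mod $p^2$, and scaling by $u_2^{-1}$ gives first coordinate $\equiv 0 \pmod p$, hence of the form $(vp,1)$ with $v \in \{0,\dots,p-1\}$. The condition $p \mid u_1$ is preserved by unit scaling, so the two families cannot collapse into each other; within the first family $(1,u) \sim (1,u')$ forces $u \equiv u' \pmod{p^2}$, and within the second $(vp,1) \sim (v'p,1)$ forces $vp \equiv v'p \pmod{p^2}$, i.e. $v \equiv v' \pmod p$. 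Thus the $p^2 + p$ listed first rows are distinct classes, and since $|\mathbb{P}^1(\mathbb{Z}/p^2\mathbb{Z})| = p^2 + p$ they are all of them, completing the proof.
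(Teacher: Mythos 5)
Your proof is correct and follows essentially the same route as the paper's: normalize each coprime pair by scaling with the inverse of whichever coordinate is a unit, and check the listed representatives are pairwise inequivalent. The only addition is the framing via $\mathbb{P}^1(\mathbb{Z}/p^{\beta}\mathbb{Z})$ and the cardinality count $p^{\beta-1}(p+1)$, which is a harmless (and slightly redundant) second certificate of exhaustion, since your direct normalization argument already shows every class is hit.
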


\begin{proof}[Proof of Lemma \ref{lem_RSets}]
$\beta=0$: Given any relatively prime pair $(u_1, u_2)$, then $(u_1, u_2)\sim(1,0) \imod{p^0}$. \\ 
$\beta=1$: We first note that no two pairs from $(1,0), (1,1), \cdots (1, p-1)$ and $(0,1)$ are equivalent under $\sim \imod{p}$. Consider a pair of relatively prime integers $(u_1, u_2)$. If $p \mid u_1$ then $p \nmid u_2$ and $(u_1, u_2) \sim (0, 1) \imod{p}$, where the relation is got by taking $a$ to be the inverse of $u_2$ modulo $p$, in (\ref{eq_EquivRel}). If $p \nmid u_1$ then $(u_1, u_2) \sim (1, u)  \imod{p}$ where $u \in \{0,1,\cdots, p-1\}$ and $u \equiv u_1^{-1} u_2 \imod{p}$.\\
$\beta=2:$ We note that no two pairs from $(1,0), (1,1), \cdots (1, p^2-1), (p, 1), (2p, 1), \cdots (p^2-p,1)$ and $(0,1)$ are equivalent under $\sim \imod{p^2}$. Consider a pair of relatively prime integers $(u_1, u_2)$. If $p^2 \mid u_1$ then $p \nmid u_2$ and $(u_1, u_2) \sim (0, 1) \imod{p^2}$, where the relation is got by taking $a$ to be the inverse of $u_2$ modulo $p^2$, in (\ref{eq_EquivRel}). If $p\mid u_1$ but $p^2 \nmid u_1$ then $p\nmid u_2$ and we can let $u_1=rp$ for some $r \in \{1,2, \cdots p-1 \}$. Then $(u_1, u_2) = (rp, u_2) \sim (up, 1) \imod{p^2}$ where $u \in \{1,\cdots, p-1\}$ and $u \equiv s r \imod{p}$, where $s$ is the inverse of $u_2$ modulo $p^2$. Finally if $p \nmid u_1$ then $(u_1, u_2) \sim (1, u)  \imod{p^2}$ where $u \in \{0,1,\cdots, p^2-1\}$ and $u \equiv u_1^{-1} u_2 \imod{p^2}$.
\end{proof}

We will the need the following two results to simplify many of the Fourier coefficients in later results. The first recalls a fact about primes congruent to 1 modulo 4.

\begin{prop}\label{prop_sumofsquares}
Let $p \equiv 1 \imod{4}$ be prime and let ${\beta}$ be a positive integer. There exist positive integers $x_{\beta}, y_{\beta}$ such that $p^{\beta}=x_{\beta}^2+y_{\beta}^2$ with $x_{\beta}$ odd, $y_{\beta}$ even and $p \nmid x_{\beta}, p \nmid y_{\beta}$.
\end{prop}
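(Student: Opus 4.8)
The plan is to establish the existence of a representation $p^\beta = x_\beta^2 + y_\beta^2$ with the stated parity and non-divisibility conditions, building up from the base case $\beta = 1$ and then handling higher powers by multiplying within the Gaussian integers. First I would recall the classical fact that a prime $p \equiv 1 \pmod 4$ is a sum of two squares, so there exist positive integers $a, b$ with $p = a^2 + b^2$ and $(a,b) = 1$. Since $p$ is odd, exactly one of $a, b$ is odd and the other even, so after relabeling we may take $a$ odd and $b$ even; moreover $p \nmid a$ and $p \nmid b$ since otherwise $p \mid a^2 + b^2 = p$ would force $p \mid a$ and $p \mid b$, contradicting $a^2 + b^2 = p < p^2$. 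This settles $\beta = 1$.

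For the general case, the cleanest route is to work in $\mathbb{Z}[i]$. Writing $\pi = a + bi$, so that $p = \pi \bar\pi$ with $a$ odd and $b$ even, I would set $\pi^\beta = x_\beta + y_\beta i$ for appropriate integers $x_\beta, y_\beta$ (up to sign, which we can absorb), giving $p^\beta = |\pi^\beta|^2 = x_\beta^2 + y_\beta^2$ automatically. The key point is then to track the parity of $x_\beta$ and $y_\beta$ and to verify $p \nmid x_\beta$, $p \nmid y_\beta$.

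For the non-divisibility, the argument is that $p \mid x_\beta$ and $p \mid y_\beta$ together would give $\pi \mid \pi^\beta$ and $\bar\pi \mid \pi^\beta$ in $\mathbb{Z}[i]$; but $\bar\pi \mid \pi^\beta$ forces $\bar\pi \mid \pi$ (since $\bar\pi$ is a Gaussian prime and $\pi, \bar\pi$ are non-associate because $p \equiv 1 \pmod 4$ splits), which is false. Concretely, $p = \pi\bar\pi \mid x_\beta + y_\beta i$ in $\mathbb{Z}[i]$ means $\bar\pi \mid \pi^\beta$, contradicting unique factorization. For the parity condition, I would reduce $\pi^\beta = x_\beta + y_\beta i$ modulo $2$: since $a$ is odd and $b$ is even, $\pi \equiv 1 \pmod{(1+i)^2}$ can be used, or more elementarily one shows by induction on $\beta$ that $\pi^\beta \equiv 1 \pmod 2$ in the sense that its real part is odd and imaginary part even, using $\pi^{\beta+1} = \pi^\beta \cdot \pi$ and the binomial-type expansion $x_{\beta+1} = a x_\beta - b y_\beta$, $y_{\beta+1} = a y_\beta + b x_\beta$; with $a$ odd, $b$ even, and $x_\beta$ odd, $y_\beta$ even inductively, we get $x_{\beta+1}$ odd and $y_{\beta+1}$ even. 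Finally, positivity is arranged by replacing $(x_\beta, y_\beta)$ with $(|x_\beta|, |y_\beta|)$, which preserves all the required properties.

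The main obstacle I anticipate is the bookkeeping in the inductive parity step, making sure the recursion $x_{\beta+1} = a x_\beta - b y_\beta$, $y_{\beta+1} = a y_\beta + b x_\beta$ genuinely preserves "real part odd, imaginary part even" at every stage; this is the only place where the hypothesis $p \equiv 1 \pmod 4$ (forcing the specific splitting $p = \pi\bar\pi$ with non-associate factors) and the odd/even split of $a, b$ are both used in an essential way. The non-divisibility, by contrast, follows cleanly from unique factorization in $\mathbb{Z}[i]$ once the splitting is in place.
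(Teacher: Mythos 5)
Your proposal is correct, but it takes a genuinely different route from the paper. The paper argues entirely elementarily: it writes $p^{n+1}=(x_n^2+y_n^2)(x_1^2+y_1^2)$ via the two sign-variants of the Brahmagupta--Fibonacci identity, $|x_1x_n \pm y_1 y_n|^2 + |x_n y_1 \mp x_1 y_n|^2$, observes that $p$ cannot divide both $x_1x_n+y_1y_n$ and $x_1x_n-y_1y_n$ (else $p \mid 2x_1x_n$), and at each inductive step \emph{chooses} whichever variant has first component prime to $p$. Your argument instead fixes the choice once and for all by taking powers of a single Gaussian prime $\pi = a+bi$: the recursion $x_{\beta+1}=ax_\beta - by_\beta$, $y_{\beta+1}=ay_\beta+bx_\beta$ is exactly one of the paper's two branches, and unique factorization in $\mathbb{Z}[i]$ (with $\pi$, $\bar\pi$ non-associate) shows this single branch never fails. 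What your approach buys is the elimination of the case-by-case choice and a conceptual explanation of \emph{why} one branch always works; what the paper's buys is complete self-containment, using nothing beyond integer arithmetic. Both handle parity identically through the recursion with $a$ odd, $b$ even. One small point to tidy up: your $\mathbb{Z}[i]$ argument directly rules out $p$ dividing \emph{both} $x_\beta$ and $y_\beta$, whereas the proposition asserts $p$ divides neither; you should add the one-line observation that since $x_\beta^2+y_\beta^2=p^\beta$ with $\beta\geq 1$, divisibility of either one by $p$ forces divisibility of the other, so ``not both'' upgrades to ``neither'' (and likewise rules out $y_\beta=0$, securing positivity).
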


\begin{proof}
This is well known in the case ${\beta}=1$. When $\beta=2$, $x_2 = | x_1^2-y_1^2 |$ and $y_2= 2x_1y_1$ satisfy the conditions. We now inductively define $x_{n+1}$ and $y_{n+1}$, for $n>1$, in a similar manner. If $p^n = x_n^2+ y_n^2$ satisfies the terms of the proposition, then
\begin{align*}
p^{n+1} &= p^n p = (x_n^2+ y_n^2)(x_1^2+y_1^2)\\
&=|x_1x_n + y_1 y_n |^2 + | x_n y_1 - x_1 y_n | ^2\\
&=| x_1x_n - y_1 y_n |^2 + | x_n y_1 + x_1 y_n | ^2.
\end{align*}
Now $p$ cannot divide both $x_1x_n + y_1 y_n$ and $x_1x_n - y_1 y_n$. Otherwise $p$ would have to divide their sum, $2x_1x_n$, which is a contradiction. We let $x_{n+1}$ be the one of $|x_1x_n + y_1 y_n|$ and $|x_1x_n - y_1 y_n|$ which is not divisible by $p$. We then choose the corresponding $y_{n+1}$ from $|x_n y_1 \pm x_1 y_n|$.  
\end{proof}

\begin{lemma}\label{lem_STS}
Let $p \equiv 1 \imod{4}$ be prime and let ${\beta}$ be a positive integer. Then for an integer $u$ satisfying $u^2 \equiv -1 \imod{p^{\beta}}$ there exists $S \in SL_2(\mathbb{Z})$ such that
\begin{equation*}
S \, {^tS} = \mat{(1+u^2)/p^{\beta}}{u}{u}{p^{\beta}}.
\end{equation*}
\end{lemma}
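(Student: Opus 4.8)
The plan is to build $S$ explicitly from the representation $p^{\beta} = x_{\beta}^2 + y_{\beta}^2$ supplied by Proposition \ref{prop_sumofsquares}. Writing $S = \mat{a}{b}{c}{d}$, the identity $S\,{}^tS = \mat{a^2+b^2}{ac+bd}{ac+bd}{c^2+d^2}$ shows that the two rows of $S$ should be vectors whose squared lengths are $(1+u^2)/p^{\beta}$ and $p^{\beta}$. Note that $(1+u^2)/p^{\beta}\in\mathbb{Z}^{+}$ since $p^{\beta}\mid 1+u^2$, and that the target matrix has determinant $1$, which is consistent with $\det(S\,{}^tS)=(\det S)^2$. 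Accordingly I would take the second row to be $(c,d)=(x_{\beta},y_{\beta})$, which already yields $c^2+d^2=p^{\beta}$, and then determine the first row $(a,b)$ by imposing the two remaining conditions $ac+bd=u$ and $ad-bc=1$ simultaneously.

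Since the coefficient matrix $\mat{d}{-c}{c}{d}$ of this linear system has determinant $c^2+d^2=p^{\beta}$, Cramer's rule forces
\[
a = \frac{d+uc}{p^{\beta}}, \qquad b = \frac{ud-c}{p^{\beta}}.
\]
Two short formal computations then confirm that with these values one automatically has $ad-bc=1$, $ac+bd=u$, and $a^2+b^2=(1+u^2)(c^2+d^2)/p^{2\beta}=(1+u^2)/p^{\beta}$; in particular the off-diagonal entry, the diagonal entry, and the determinant all come out correctly for free, and the value $\det S = 1$ places $S$ in $SL_2(\mathbb{Z})$ rather than merely $GL_2(\mathbb{Z})$. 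Thus the whole lemma collapses to the single integrality claim $a,b\in\mathbb{Z}$, i.e. $d+uc\equiv 0$ and $ud-c\equiv 0 \pmod{p^{\beta}}$.

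The crux, and the one genuine obstacle, is securing this integrality, which amounts to matching $(x_{\beta},y_{\beta})$ to the correct square root of $-1$. Since $p\nmid y_{\beta}$ and $x_{\beta}^2+y_{\beta}^2=p^{\beta}\equiv 0\pmod{p^{\beta}}$, the element $x_{\beta}y_{\beta}^{-1}$ is a square root of $-1$ modulo $p^{\beta}$; as $(\mathbb{Z}/p^{\beta}\mathbb{Z})^{\times}$ is cyclic of order divisible by $4$, the residue $-1$ has exactly the two square roots $\pm u$, so $x_{\beta}\equiv \pm u\,y_{\beta}\pmod{p^{\beta}}$. Replacing $y_{\beta}$ by $-y_{\beta}$ if necessary, which changes neither $c^2+d^2$ nor the coprimality to $p$, I may assume $c=x_{\beta}\equiv u\,y_{\beta}=ud\pmod{p^{\beta}}$. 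Then $ud-c\equiv 0$ gives $b\in\mathbb{Z}$, while $d+uc\equiv d+u(ud)=d(1+u^2)\equiv 0\pmod{p^{\beta}}$ gives $a\in\mathbb{Z}$, completing the construction.

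One could alternatively phrase the entire argument in $\mathbb{Z}[i]$, identifying the rows of $S$ with Gaussian integers $\zeta_1,\zeta_2$ and reducing to the factorization $\bar{\zeta_1}\zeta_2=u+i$ with $|\zeta_2|^2=p^{\beta}$, where $\zeta_2=\pi^{\beta}$ for the Gaussian prime $\pi\mid p$ that divides $u+i$; here the determinant condition is exactly $\textup{Im}(\bar{\zeta_1}\zeta_2)=\textup{Im}(u+i)=1$. I would nonetheless favor the real-variable version above, as it is self-contained given Proposition \ref{prop_sumofsquares} and avoids invoking unique factorization in $\mathbb{Z}[i]$.
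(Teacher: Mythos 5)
Your proof is correct and follows essentially the same route as the paper: both build $S$ from the decomposition $p^{\beta}=x_{\beta}^2+y_{\beta}^2$ of Proposition \ref{prop_sumofsquares}, with $(x_\beta,y_\beta)$ (up to order and sign) as one row and the entries $(uy_\beta\pm x_\beta)/p^\beta$, $(ux_\beta\mp y_\beta)/p^\beta$ in the other, and both reduce the lemma to choosing the sign that makes those entries integral. Your handling of that last step is a little slicker --- observing $x_\beta\equiv\pm u\,y_\beta\pmod{p^\beta}$ directly and deducing the second divisibility from $p^\beta\mid 1+u^2$ in one line, where the paper runs a longer case analysis by contradiction --- but the construction and the key idea are the same.
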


\begin{proof}
We choose $x_{\beta}$, $y_{\beta}$ in accordance with Proposition \ref{prop_sumofsquares} such that $p^{\beta}=x_{\beta}^2+y_{\beta}^2$ .
Consider
\begin{equation*}
S_1 = \mat{(uy_{\beta}+x_{\beta})/p^{\beta}}{(ux_{\beta}-y_{\beta})/p^{\beta}}{y_{\beta}}{x_{\beta}}
\text{ and }
S_2 = \mat{(uy_{\beta}-x_{\beta})/p^{\beta}}{-(ux_{\beta}+y_{\beta})/p^{\beta}}{y_{\beta}}{-x_{\beta}}.
\end{equation*}
It is easy to check that $\textup{Det}(S_1)=\textup{Det}(S_2)=1$ and that
$$S_1 \, {^tS_1} = S_2 \, {^tS_2} = \mat{(1+u^2)/p^{\beta}}{u}{u}{p^{\beta}}.$$ 
We will now show that one of $S_1, S_2$ is integral. Whichever one that is, then satisfies the requirements in the lemma for S.
Consider 
$$(uy_{\beta}+x_{\beta})(uy_{\beta}-x_{\beta}) = u^2y_{\beta}^2 - x_{\beta}^2 
\equiv - y_{\beta}^2 - x_{\beta}^2 \equiv 0 \pmod{p^{\beta}}.$$
So $p^{\beta} \mid (uy_{\beta}+x_{\beta})(uy_{\beta}-x_{\beta})$. If $p \mid uy_{\beta}+x_{\beta}$ and $p \mid uy_{\beta}-x_{\beta}$ then $p \mid 2x_{\beta}$, which is a contradiction. So $p^{\beta} \mid uy_{\beta}+x_{\beta}$ or $p^{\beta} \mid uy_{\beta}-x_{\beta}$.
Similarly, by considering $(ux_{\beta}-y_{\beta})(ux_{\beta}+y_{\beta})$ we can show that $p^{\beta} \mid ux_{\beta}-y_{\beta}$ or $p^{\beta} \mid ux_{\beta}+y_{\beta}$.

Assume $p^{\beta} \mid uy_{\beta}+x_{\beta}$. If also $p^{\beta} \mid ux_{\beta}+y_{\beta}$, then $p^{\beta}$ divides their difference, i.e., $p^{\beta} \mid (x_{\beta} - y_{\beta})(u-1)$. Now $p \nmid u-1$, as $u^2 \equiv -1 \imod{p}$, so $p^{\beta} \mid x_{\beta} - y_{\beta}$. In which case
$$p^{\beta} \mid (x_{\beta} - y_{\beta})(x_{\beta} + y_{\beta})+(x_{\beta}^2 + y_{\beta}^2) = 2 x_{\beta}^2,$$
which is a contradiction. So $p^{\beta} \nmid ux_{\beta}+y_{\beta}$. Therefore $p^{\beta} \mid ux_{\beta}-y_{\beta}$. So we have proved that if $p^{\beta} \mid uy_{\beta}+x_{\beta}$ then  $p^{\beta} \mid ux_{\beta}-y_{\beta}$ also, and hence $S_1$ is integral.
Similarly, if we assume $p^{\beta} \mid uy_{\beta}-x_{\beta}$ then we can show that  $p^{\beta} \nmid ux_{\beta}-y_{\beta}$. This implies that $p^{\beta} \mid ux_{\beta}+y_{\beta}$ and that $S_2$ is integral in this case. Therefore one of $S_1, S_2$ must be integral.
\end{proof}

We now prove Corollaries \ref{cor_apNRed} - \ref{cor_apN} which we will then use to prove Theorem \ref{thm_mult}.
\begin{proof}[Proof of Corollary \ref{cor_apNRed}]
Consider Theorem \ref{thm_Andrianov}. Let $U=\sm{u_1}{u_2}{u_3}{u_4}$. Then 
$$
\mat{r_u}{b_u/2}{b_u/2}{s_u}=
\mat{r u_1^2+b u_1 u_2+s u_2^2}{r u_1 u_3 + \tfrac{b}{2} (u_1 u_4 + u_2 u_3) + su_2 u_4}{r u_1 u_3 + \tfrac{b}{2} (u_1 u_4 + u_2 u_3) + su_2 u_4}{r u_3^2+b u_3 u_4+s u_4^2}.$$
We first consider the case when $\beta =0$. By Lemma \ref{lem_RSets}, $R(p^0) = \{I\}$ and so $U= I$ is the only term to consider in the second sum. In which case $s_u = s$. The condition on the second sum that $s_u \equiv 0 \pmod {p^\gamma}$ then implies $\gamma=0$, as $s \not\equiv 0 \pmod p$. Therefore, the contribution to $a(p^{\delta}; N)$ in the $\beta=0$ case is $a(p^{\delta}N)$.

Now we consider when $\beta \geq 1$. The condition $r_u \equiv 0 \pmod{p^{\beta}}$ implies $p \mid r u_1^2+b u_1 u_2+s u_2^2$. If $p \mid u_1$ then $p \nmid u_2$ as $(u_1,u_2)=1$ and so $p \mid s$. So if $s \not\equiv 0 \pmod{p}$ then $p \nmid u_1$.
In this case $(u_1, u_2) \sim (1, u) \pmod{p^{\beta}}$, where $u \in \{0, 1, \cdots, p^{\beta}-1 \}$ with $u \equiv u_1^{-1} u_2 \pmod{p^{\beta}}$, and $u_1^{-1}$ is the inverse of $u_1$ in $\left( \mathbb{Z} / p^{\beta} \mathbb{Z} \right)^{\times}$. So if  $s \not\equiv 0 \pmod{p}$ we need only consider $U$ in the subset 
$\{ \sm{1}{u}{0}{1} \mid u= 0, 1, \cdots, p^{\beta}-1 \}$
of $R(p^{\beta})$. Note that if $U= \sm{1}{u}{0}{1}$ then 
$$\mat{r_u}{\frac{b_u}{2}}{\frac{b_u}{2}}{s_u}=
\mat{r+b u+s u^2}{ \tfrac{b}{2} + su}{\tfrac{b}{2}  + su}{s}.$$
In particular $s_u = s$ and so the condition that $s_u \equiv 0 \pmod {p^\gamma}$ implies $\gamma=0$. Thus 
Theorem \ref{thm_Andrianov} reduces to
$$a(p^{\delta}; N) = a(p^{\delta}N)
+\sum_{\beta=1}^{\delta} p^{(k-2)\beta}
\sum_{\substack{u=0 \\ r+bu+su^2 \equiv 0 \, (p^{\beta})}}^{p^{\beta}-1}
a \left( p^{\delta - \beta} \sm{(r+bu+su^2) p^{-\beta}}{b/2 +su}{b/2+su}{s p^{\beta}} \right),$$
as required.

\end{proof}

\begin{proof}[Proof of Corollary \ref{cor_apmI}]
Taking $N=mI$ in Corollary \ref{cor_apNRed} we get
\begin{equation*}
a(p^{\delta}; mI) = a(mp^{\delta}I)
+\sum_{\beta=1}^{\delta} p^{(k-2)\beta}
\sum_{\substack{u=0 \\ m+mu^2 \equiv 0 \, (p^{\beta})}}^{p^{\beta}-1}
a \left(m p^{\delta - \beta} \sm{(1+u^2) p^{-\beta}}{u}{u}{ p^{\beta}} \right)
\end{equation*}
Now $m+mu^2 \equiv 0 \imod{p^{\beta}} \Leftrightarrow u^2 \equiv -1  \imod{p^{\beta}}$ as $(m,p)=1$. If $p \equiv 3 \imod{4}$ there is no such $u$ and so $a(p^{\delta}; mI) = a(mp^{\delta}I)$. If $p=2$ the only solution to $u^2 \equiv -1  \imod{p^{\beta}}$ is when $\beta=1$, in which case $u=1$ and
$$a(p^{\delta}; mI) = a(mp^{\delta}I) + p^{k-2} \, a\bigl(m p^{\delta-1} \sm{1}{1}{1}{2}\bigr) .$$
Taking $U = \sm{1}{0}{1}{1}$ and $N=m p^{\delta-1} I$ in (\ref{for_UNTU}) we see that 
$$a\bigl(m p^{\delta-1} \sm{1}{1}{1}{2}\bigr) = a(m p^{\delta-1} I).$$
Now we examine the case when $p \equiv 1 \imod{4}$. From Lemma \ref{lem_STS} we know there exists exists $S \in SL_2(\mathbb{Z})$ such that
\begin{equation*}
S \, {^tS} = \sm{(1+u^2)/p^{\beta}}{u}{u}{p^{\beta}}.
\end{equation*}
Therefore \begin{equation*}
S \, m p^{\delta - \beta}I \, {^tS} = m p^{\delta - \beta} \sm{(1+u^2) p^{-\beta}}{u}{u}{ p^{\beta}},
\end{equation*}
and so by (\ref{for_UNTU}) we see that
$$a \left(m p^{\delta - \beta} \sm{(1+u^2) p^{-\beta}}{u}{u}{ p^{\beta}} \right) = a(m p^{\delta - \beta} I).$$
Noting that $u^2 \equiv -1  \imod{p^{\beta}}$ has two solutions when $p \equiv 1 \imod{4}$, completes the proof.
\end{proof}

\begin{proof}[Proof of Corollary \ref{cor_apN}]
We put $\delta=1$ in Theorem \ref{thm_Andrianov} and consider the cases $\alpha=1$, $\beta=1$ and $\gamma=1$ separately. \\
Case 1: $\alpha=1$. When $\alpha=1$ then $\beta= \gamma=0$ and $R(p^{\beta}) = \{ I \}$ by Lemma \ref{lem_RSets}. Therefore the contribution to $a(p;N)$ in this case is
\begin{equation*}
a(p;N)_{1} = a(pN).
\end{equation*}
Case 2: $\beta=1$. When $\beta=1$ then $\alpha= \gamma=0$ and $R(p^{\beta}) = \{ \sm{1}{u}{0}{1} \mid u=0,1,\cdots, p-1\} \cup \{\sm{0}{1}{-1}{0} \}$,
 by Lemma \ref{lem_RSets}. When $U=\sm{0}{1}{-1}{0}$ then $\sm{r_u}{b_u/2}{b_u/2}{s_u} = \sm{s}{-b/2}{-b/2}{r}$ and when $U=\sm{1}{u}{0}{1}$ then $\sm{r_u}{b_u/2}{b_u/2}{s_u} = \sm{r+bu+su^2}{b/2+su}{b/2+su}{s}$. Therefore the contribution to $a(p;N)$ in this case is
\begin{equation*}
a(p;N)_{2} = 
p^{k-2} \left[ \sum_{\substack{u=0 \\ r+bu+su^2 \equiv 0 \, (p)}}^{p-1}
\aN{(r+bu+su^2)p^{-1}}{b/2 +su}{b/2+su}{s p} 
+\aN{sp^{-1}}{-b/2}{-b/2}{rp} 
\right],
\end{equation*}
where the last term requires $s \equiv 0 \imod{p}$.
Note that by (\ref{for_ASwitch1}) we get
$$\aN{sp^{-1}}{-b/2}{-b/2}{rp} = \aN{rp}{b/2}{b/2}{sp^{-1}}.$$
Case 3: $\gamma=1$. When $\gamma=1$ then $\alpha= \beta=0$ and $R(p^{\beta}) = \{I \}$. Therefore the contribution to $a(p;N)$ in this case is
\begin{equation*}
a(p;N)_{3} = p^{2k-3} \, a(p^{-1}N),
\end{equation*}
and we require $r \equiv b\equiv s \equiv 0 \imod{p}$.\\
Combining the three cases we get the desired result.
\end{proof}

\begin{proof}[Proof of Theorem \ref{thm_mult}]
(1) Let $n \in \mathbb{Z}^{+}$. Taking $m=n$ in Corollary \ref{cor_apmI} and considering (\ref{eq_alambda}) we see that, when $(n,p)=1$,
\begin{equation}\label{eq_anlambda_pdelta}
a(nI) \lambda(p^{\delta}) = a(np^{\delta} I)
+
\begin{cases}
2 \displaystyle\sum_{\beta=1}^{\delta} p^{(k-2)\beta} \, a(np^{\delta-\beta}I) & \textup{ if } p\equiv 1 \pmod{4}\\[15pt]
p^{k-2} \, a(np^{\delta-1}I)& \textup{ if } p=2\\[6pt]
0 & \textup{ if } p\equiv 3 \pmod{4},
\end{cases}
\end{equation}
and, specifically, in the case $\delta=1$ we get that
\begin{equation}\label{eq_anlambda_p}
a(nI) \lambda(p) 
 = a(np I)
+
\begin{cases}
2 \, p^{k-2} \, a(nI) & \textup{ if } p\equiv 1 \pmod{4}\\[15pt]
p^{k-2} \, a(nI)& \textup{ if } p=2\\[6pt]
0 & \textup{ if } p\equiv 3 \pmod{4}.
\end{cases}
\end{equation}
Therefore, if $a(nI)=0$ then $a(npI)=0$, whenever $(n,p)=1$. Inductively, we can then show, using (\ref{eq_anlambda_pdelta}), that
\begin{equation}\label{eq_an_anpdelta}
a(nI)=0 \Rightarrow a(np^{\delta}I)=0,
\end{equation}
for any $\delta \in \mathbb{Z}^{+}$, whenever $(n,p)=1$. If $m=p_1^{\delta_1} p_2^{\delta_2} \cdots p_k^{\delta_k}$ for distinct primes $p_1, p_2, \cdots p_k$, then repeated use of (\ref{eq_an_anpdelta}) yields
$$a(I)=0 \Rightarrow a(p_1^{\delta_1}I)=0 \Rightarrow a(p_1^{\delta_1}p_2^{\delta_2}I)=0 \Rightarrow \dots \Rightarrow a(mI)=0,$$
as required.

(2) If $a(I)=0$ then part (1) tells us that all terms are zero, and hence the statement is trivially true. Assume $a(I)\neq0$. It then suffices to prove 
\begin{equation}\label{eq_aIampdeltaI_amIapdeltaI}
a(I) a(mp^{\delta}I) = a(mI) a(p^{\delta}I),
\end{equation}
for $(m,p)=1$, ${\delta}\in \mathbb{Z}^{+}$. 

We start with the $\delta=1$ case. From (\ref{eq_anlambda_p}) we know that
\begin{equation}\label{eq_amlambda_p}
a(mI) \lambda(p) 
 = a(mp I)
+
\begin{cases}
2 \, p^{k-2} \, a(mI) & \textup{ if } p\equiv 1 \pmod{4}\\[15pt]
p^{k-2} \, a(mI)& \textup{ if } p=2\\[6pt]
0 & \textup{ if } p\equiv 3 \pmod{4}.
\end{cases}
\end{equation}
and, when $m=1$,
\begin{equation}\label{eq_aIlambda_p}
a(I) \lambda(p) 
 = a(p I)
+
\begin{cases}
2  p^{k-2} \, a(I) & \textup{ if } p\equiv 1 \pmod{4}\\[15pt]
p^{k-2} \, a(I)& \textup{ if } p=2\\[6pt]
0 & \textup{ if } p\equiv 3 \pmod{4}.
\end{cases}
\end{equation}
We now multiply both sides of (\ref{eq_amlambda_p}) by $a(I)$ and both sides of (\ref{eq_aIlambda_p}) by $a(mI)$, and then compare the resulting expressions. In all cases we get $a(I) a(mpI)=a(mI)a(pI)$, as required.

We prove the general case by induction. Assume (\ref{eq_aIampdeltaI_amIapdeltaI}) holds for all $1 \leq t < \delta$.
From (\ref{eq_anlambda_pdelta}) we know that
\begin{equation}\label{eq_amlambda_pdelta}
a(mI) \lambda(p^{\delta}) = a(mp^{\delta} I)
+
\begin{cases}
2 \displaystyle\sum_{\beta=1}^{\delta} p^{(k-2)\beta} \, a(mp^{\delta-\beta}I) & \textup{ if } p\equiv 1 \pmod{4}\\[15pt]
p^{k-2} \, a(mp^{\delta-1}I)& \textup{ if } p=2\\[6pt]
0 & \textup{ if } p\equiv 3 \pmod{4},
\end{cases}
\end{equation}
and, when $m=1$,
\begin{equation}\label{eq_aIlambda_pdelta}
a(I) \lambda(p^{\delta}) = a(p^{\delta} I)
+
\begin{cases}
2 \displaystyle\sum_{\beta=1}^{\delta} p^{(k-2)\beta} \, a(p^{\delta-\beta}I) & \textup{ if } p\equiv 1 \pmod{4}\\[15pt]
p^{k-2} \, a(p^{\delta-1}I)& \textup{ if } p=2\\[6pt]
0 & \textup{ if } p\equiv 3 \pmod{4},
\end{cases}
\end{equation}
We now multiply both sides of (\ref{eq_amlambda_pdelta}) by $a(I)$ and both sides of (\ref{eq_aIlambda_pdelta}) by $a(mI)$, and then compare the resulting expressions. Note that $a(I) a(mp^{\delta-\beta}I) = a(mI) a(p^{\delta-\beta}I)$ for all $1 \leq \beta \leq \delta$ by the induction hypothesis.  Therefore, in all cases we get $a(I) a(mp^{\delta}I)=a(mI)a(p^{\delta}I)$, as required.

(3) Taking $N=p^rI$, for $r \geq 1$, in Corollary \ref{cor_apN} and combining with (\ref{eq_alambda}) we get that
\begin{multline}\label{eq_aprIlambda_p}
a(p^rI) \lambda(p) = \\
a(p^{r+1}I) + p^{k-2}
\left[\sum_{u=0}^{p-1}
\aN{p^{r-1} (1+u^2)}{p^r u}{p^r u}{p^{r+1}} 
+ \aN{p^{r+1}}{0}{0}{p^{r-1}}
\right]
+ p^{2k-3} \, a(p^{r-1}I)
\end{multline}
Multiplying both sides of (\ref{eq_aprIlambda_p}) by $a(I)$ and both sides of (\ref{eq_aIlambda_p}), which comes from Corollary \ref{cor_apmI} with $\delta=m=1$ so still holds, by $a(p^{r}I)$, and comparing the resulting expressions yields
\begin{multline}\label{for_apr1_1}
a(I) a(p^{r+1} I) = a(pI) a(p^r I) - p^{2k-3} a(I) a(p^{r-1}I)
- p^{k-2} a(I)\\
\times
 \left[2 \, \aN{p^{r+1}}{0}{0}{p^{r-1}}
 + \sum_{u=1}^{p-1}
\aN{p^{r-1} (1+u^2)}{p^r u}{p^r u}{p^{r+1}} 
- a(p^r I) \times
\begin{cases}
2  & \textup{ if } p\equiv 1 \pmod{4},\\
1 & \textup{ if } p=2.
\end{cases}
\right]
\end{multline}
If $p=2$ then $u=1$ is the only contribution to the sum, and $\aN{p^{r-1} (1+u^2)}{p^r u}{p^r u}{p^{r+1}} = a\left( p^r \sm{1}{1}{1}{2} \right)$. Now
$\sm{1}{0}{1}{1} I \sm{1}{1}{0}{1} =  \sm{1}{1}{1}{2}$ and so, by (\ref{for_UNTU}),
$a\left( p^r \sm{1}{1}{1}{2} \right) = a(p^r I)$. Therefore the result is proven in the case $p=2$.

We now consider the case when $p \equiv 1 \pmod{4}$. When $u^2 \equiv -1 \pmod {p}$ we know from Lemma \ref{lem_STS}, with $\beta =1$, that there exists $S \in SL_2(\mathbb{Z})$ such that
\begin{equation*}
S \, p^rI \, {^tS} = p^r \mat{(1+u^2)/p}{u}{u}{p},
\end{equation*}
and so by (\ref{for_UNTU}),
$\aN{p^{r-1} (1+u^2)}{p^r u}{p^r u}{p^{r+1}} = a(p^r I)$. There are two solutions to $u^2 \equiv -1 \pmod {p}$  in the range $1 \leq u \leq p-1$, when $p \equiv 1 \pmod{4}$, and none when $p \equiv 3 \pmod{4}$. Therefore, when $p \neq 2$, (\ref{for_apr1_1}) becomes,
\begin{multline*}\label{for_apr1_2}
a(I) a(p^{r+1} I) = a(pI) a(p^r I) - p^{2k-3} a(I) a(p^{r-1}I)
- p^{k-2} a(I)\\
\times
 \left[2 \, \aN{p^{r+1}}{0}{0}{p^{r-1}}
 + \sum_{\substack{u = 1 \\ u^2 \not\equiv -1 \imod{p}}}^{p-1}
\aN{p^{r-1} (1+u^2)}{p^r u}{p^r u}{p^{r+1}} 
\right].
\end{multline*}
Using (\ref{for_UNTU}) again and noting that 
$$ \sm{-1}{1}{0}{1} p^r \sm{(1+u^2)/p}{u}{u}{p} \sm{-1}{0}{1}{1} = p^r \sm{(1+(p-u)^2)/p}{p-u}{p-u}{p},$$
tells us that 
$$\aN{p^{r-1} (1+(p-u)^2)}{p^r (p-u)}{p^r (p-u)}{p^{r+1}} = (-1)^k \, \aN{p^{r-1} (1+u^2)}{p^r u}{p^r u}{p^{r+1}},$$
which completes the proof.
\end{proof}

We now prove Corollary \ref{cor_ap2N} which we use to prove Theorems \ref{thm_lambdaAI} and \ref{thm_lambdaA0}.

\begin{proof}[Proof of Corollary \ref{cor_ap2N}]
We put $\delta=2$ in Theorem \ref{thm_Andrianov} and consider separately the six cases covering each of the possible values for $(\alpha, \beta, \gamma)$. \\
Case 1: $(\alpha, \beta, \gamma)=(2,0,0)$.
In this case $R(p^{\beta}) = \{ I \}$ by Lemma \ref{lem_RSets}. Therefore the contribution to $a(p;N)$ in this case is
\begin{equation*}
a(p;N)_{1} = a(p^2 N).
\end{equation*}
Case 2: $(\alpha, \beta, \gamma)=(0,2,0)$.
In this case $R(p^{\beta}) = R(p^2) = \{ \sm{1}{u}{0}{1} \mid u=0,1,\cdots, p^2-1\} 
\cup \{ \sm{up}{1}{-1}{0} \mid u=0,\cdots, p-1\} $
 by Lemma \ref{lem_RSets}. 
When 
$U=\sm{1}{u}{0}{1}$ then $\sm{r_u}{b_u/2}{b_u/2}{s_u} = \sm{r+bu+su^2}{b/2+su}{b/2+su}{s}$, and when $U=\sm{up}{1}{-1}{0}$ then $\sm{r_u}{b_u/2}{b_u/2}{s_u} = \sm{r(up)^2+bup+s}{-rup-b/2}{-rup-b/2}{r}$.
Therefore the contribution to $a(p;N)$ in this case is
\begin{multline*}
a(p;N)_{2} = 
p^{2k-4} \left[
\sum_{\substack{u=0 \\ r+bu+su^2 \equiv 0 \, (p^2)}}^{p^2-1} 
\aN{(r+bu+su^2)p^{-2}}{b/2+su}{b/2+su}{sp^2}
\right. \\ \left.
+
\sum_{\substack{u=0 \\ bup+s \equiv 0 \, (p^2)}}^{p-1} 
\aN{(r(up)^2+bup+s)p^{-2}}{-rup-b/2}{-rup-b/2}{rp^2}
\right].
\end{multline*}
Note that by (\ref{for_ASwitch1}) we get
$$\sm{(r(up)^2+bup+s)p^{-2}}{-rup-b/2}{-rup-b/2}{rp^2}=\sm{rp^2}{rup+b/2}{rup+b/2}{(r(up)^2+bup+s)p^{-2}}.$$
Case 3: $(\alpha, \beta, \gamma)=(0,0,2)$. In this case $R(p^{\beta}) = \{ I \}$ by Lemma \ref{lem_RSets}. Therefore the contribution to $a(p;N)$ in this case is
\begin{equation*}
a(p;N)_{3} = p^{4k-6} \, a(p^{-2} N),
\end{equation*}
where we require $r \equiv b \equiv s \equiv 0 \imod{p^2}$.\\
Case 4: $(\alpha, \beta, \gamma)=(1,1,0)$. In this case $R(p^{\beta}) = R(p^1) = \{ \sm{1}{u}{0}{1} \mid u=0,1,\cdots, p-1\} \cup \{\sm{0}{1}{-1}{0} \}$ by Lemma \ref{lem_RSets}.
When $U=\sm{0}{1}{-1}{0}$ then $\sm{r_u}{b_u/2}{b_u/2}{s_u} = \sm{s}{-b/2}{-b/2}{r}$ and when $U=\sm{1}{u}{0}{1}$ then $\sm{r_u}{b_u/2}{b_u/2}{s_u} = \sm{r+bu+su^2}{b/2+su}{b/2+su}{s}$.
Therefore the contribution to $a(p;N)$ in this case is
\begin{equation*}
a(p;N)_{4} = p^{k-2} \left[\sum_{\substack{u=0 \\ r+bu+su^2 \equiv 0 \, (p)}}^{p-1} 
 a\left(p \sm{(r+bu+su^2)p^{-1}}{b/2 +su}{b/2 +su}{s p} \right)
+a\left(p \sm{sp^{-1}}{-b/2}{-b/2}{rp} \right) 
\right],
\end{equation*}
where we require $s\equiv 0 \imod{p}$ in the last term. We then use (\ref{for_ASwitch1}) on the last term.\\
Case 5: $(\alpha, \beta, \gamma)=(1,0,1)$. In this case $R(p^{\beta}) = \{ I \}$ by Lemma \ref{lem_RSets}. Therefore the contribution to $a(p;N)$ in this case is
\begin{equation*}
a(p;N)_{5} = p^{2k-3} \;  a(N),
\end{equation*}
where we require $r \equiv b \equiv s \equiv 0 \imod{p}$. \\
Case 6: $(\alpha, \beta, \gamma)=(0,1,1)$.  In this case $R(p^{\beta}) = R(p^1) = \{ \sm{1}{u}{0}{1} \mid u=0,1,\cdots, p-1\} \cup \{\sm{0}{1}{-1}{0} \}$ by Lemma \ref{lem_RSets}.
When $U=\sm{0}{1}{-1}{0}$ then $\sm{r_u}{b_u/2}{b_u/2}{s_u} = \sm{s}{-b/2}{-b/2}{r}$ and when $U=\sm{1}{u}{0}{1}$ then $\sm{r_u}{b_u/2}{b_u/2}{s_u} = \sm{r+bu+su^2}{b/2+su}{b/2+su}{s}$.
Therefore the contribution to $a(p;N)$ in this case is
\begin{equation*}
a(p;N)_{6} = p^{3k-5} 
 \left[\sum_{\substack{u=0 \\ r+bu+su^2 \equiv 0 \, (p^2) \\ b \equiv s \equiv 0  \, (p)}}^{p-1} 
 a\left(p^{-1} \sm{(r+bu+su^2)p^{-1}}{b/2 +su}{b/2 +su}{s p} \right)
+a\left(p^{-1} \sm{sp^{-1}}{-b/2}{-b/2}{rp} \right) 
\right],
\end{equation*}
where we require $s \equiv 0 \imod{p^2}$ and $r \equiv b \equiv  0 \imod{p}$ in the last term. We then use (\ref{for_ASwitch1}) on the last term to get the desired format.\\
\end{proof}

\begin{proof}[Proof of Theorem \ref{thm_lambdaAI}]
We take $N = I$ in Corollaries \ref{cor_apN} and \ref{cor_ap2N} and simplify. From Corollary \ref{cor_apN} we get that 
$$a(p; I) = a(pI)
+p^{k-2} \,
\sum_{\substack{u=0 \\ u^2 \equiv -1 \, (p)}}^{p-1}
\aN{(1+u^2)p^{-1}}{u}{u}{p}.$$
If $p\equiv 3 \imod{4}$ then $u^2 \equiv -1 \imod{p}$ has no solution and so in this case $a(p; I) = a(pI)$. If $p\equiv 1 \imod{4}$ there are exactly 2 distinct solutions to $u^2 \equiv -1 \imod{p}$ for $0 \leq u \leq p-1$. Also, in this case, Lemma \ref{lem_STS} tells us that there exists $S \in SL_2(\mathbb{Z})$ such that
$$S \, {^t}S = \sm{(1+u^2)p^{-1}}{u}{u}{p},$$
and so 
$$\aN{(1+u^2)p^{-1}}{u}{u}{p} = a(S \, I \, {^t}S) = a(I)$$
by (\ref{for_UNTU}). Therefore when $p\equiv 1 \imod{4}$ we have
$$a(p; I) = a(pI) +2 \, p^{k-2} \, a(I).$$
If $p=2$ then $u=1$ is the only solution to $u^2 \equiv -1 \imod{p}$ and
$$\aN{(1+u^2)p^{-1}}{u}{u}{p} = \aN{1}{1}{1}{2} = a \left( \sm{1}{0}{1}{1} \, I \, \sm{1}{1}{0}{1} \right) = a(I)$$
by (\ref{for_UNTU}). So if $p=2$ then $a(p; I) = a(pI) + p^{k-2} \, a(I)$.
Now using the fact that $a(I) \, \lambda(p) = a(p;I)$ by (\ref{eq_alambda}), and noting that $a(I)=1$, yields the result for $\lambda(p)$.

Letting $N=I$ in Corollary \ref{cor_ap2N} we get that
\begin{multline*}
a(p^2; I) \\
= a(p^2 \, I)
+p^{k-2} \sum_{\substack{u=0 \\ u^2 \equiv -1 \, (p)}}^{p-1}
a\left(p \sm{(1+u^2)p^{-1}}{u}{u}{p} \right)
+p^{2k-4}
\sum_{\substack{u=0 \\ u^2 \equiv -1 \, (p^2)}}^{p^2-1}
\aN{(1+ u^2)p^{-2}}{u}{u}{p^2} .
\end{multline*}
Let $\beta \in \{1,2\}$. If $p\equiv 3 \imod{4}$ then $u^2 \equiv -1 \imod{p^{\beta}}$ has no solution and so in this case $a(p^2; I) = a(p^2I)$.
If $p\equiv 1 \imod{4}$ there are exactly 2 distinct solutions to $u^2 \equiv -1 \imod{p^{\beta}}$ for $0 \leq u \leq p^{\beta}-1$. Also, in this case, Lemma \ref{lem_STS} tells us that there exists $S \in SL_2(\mathbb{Z})$ such that
$$S \, {^t}S = \sm{(1+u^2)p^{-\beta}}{u}{u}{p^{\beta}}.$$
Therefore
$$\aN{(1+u^2)p^{-2}}{u}{u}{p^2} = a(S \, I \, {^t}S) = a(I)$$
and
$$a\left(p \sm{(1+u^2)p^{-1}}{u}{u}{p} \right) = a(S \, pI \, {^t}S) = a(pI),$$
by (\ref{for_UNTU}). So when $p\equiv 1 \imod{4}$ we have
$$a(p^2; I) = a(p^2I) +2 \, p^{k-2} \, a(pI) +2 \, p^{2k-4} \, a(I).$$
If $p=2$ then $u^2 \equiv -1 \imod{p^2}$ has no solutions and $u=1$ is the only solution to $u^2 \equiv -1 \imod{p}$. In the latter case
$$a\left(p \sm{(1+u^2)p^{-1}}{u}{u}{p} \right) = a \left( p \sm{1}{1}{1}{2} \right) = a \left( \sm{1}{0}{1}{1} \, pI \, \sm{1}{1}{0}{1} \right) = a(pI)$$
by (\ref{for_UNTU}). So if $p=2$ then $a(p^2; I) = a(p^2I) + p^{k-2} \, a(pI)$.
Now using the fact that $a(I) \, \lambda(p^2) = a(p^2;I)$ by (\ref{eq_alambda}), and noting that $a(I)=1$, completes the proof.
\end{proof}

\begin{proof}[Proof of Theorem \ref{thm_lambdaA0}]
We take $N = 0 = \sm{0}{0}{0}{0}$ in Corollaries \ref{cor_apN} and \ref{cor_ap2N}. From Corollary \ref{cor_apN} we get that
$$a(p;0) = a(0) + p^{k-2} \left[ p \, a(0) + a(0) \right] + p^{2k-3} \,a(0).$$
Therefore, applying (\ref{eq_alambda}) with $N=0$ and noting that $a(0) \neq 0$,
$$\lambda(p) = 1+p^{k-2} \left[ p  + 1 \right] + p^{2k-3},$$
as required.
Similarly, from Corollary \ref{cor_ap2N} we get that
\begin{multline*}
a(p^2;0) = a(0) + p^{2k-3} \, a(0) + p^{4k-6} \, a(0) +  p^{k-2} \left[p \, a(0) + a(0) \right]\\
 + p^{2k-4} \left[p^2 \, a(0) + p \, a(0) \right] + p^{3k-5} \left[ p \, a(0) + a(0) \right],
\end{multline*}
and so
$$\lambda(p^2) 
= 1 + p^{2k-3} + p^{4k-6}  +  p^{k-2} \left[p + 1 \right]
 + p^{2k-4} \left[p^2  + p \right] + p^{3k-5} \left[ p  + 1 \right],$$
as required.
\end{proof}


\section{Concluding Remarks}\label{sec_cr}
The results in this paper rely heavily on Andrianov's formula, Theorem \ref{thm_Andrianov}, for relating the Fourier coefficients of $T(p^{\delta}) F$ to the Fourier coefficients of $F$. Evdokimov \cite{Ev} generalizes this formula to degree 2 Siegel eigenforms with level. This generalization can also be derived from Andrianov's results in \cite[Proposition 5.16]{AN3}.   Using Evdokimov's result, we plan to investigate if the results of this paper can be generalized to forms with level. This will be the subject of a forthcoming paper. Preliminary investigations suggest that, at the very least, multiplicative relations between Fourier coefficients, analogous to those in Theorem \ref{thm_mult}, can be found for degree 2 Siegel eigenforms of level 2, and for degree 2 Siegel eigenforms constructed from half-integral Igusa theta constants, which are of level 8.  


\section*{Acknowledgements}
This work was supported by a grant from the Simons Foundation (\#353329, Dermot McCarthy).


\vspace{12pt}

\end{document}